\title[Discreteness of log discrepancies]{Discreteness of log discrepancies over log canonical triples on a fixed pair}
\author{Masayuki Kawakita}
\address{Research Institute for Mathematical Sciences, Kyoto University, Kyoto 606-8502, Japan}
\email{masayuki@kurims.kyoto-u.ac.jp}
\theoremstyle{plain}
\newtheorem{theorem}{Theorem}[section]
\newtheorem{proposition}[theorem]{Proposition}
\newtheorem{lemma}[theorem]{Lemma}
\newtheorem{corollary}[theorem]{Corollary}
\newtheorem{conjecture}[theorem]{Conjecture}
\theoremstyle{definition}
\newtheorem{definition}[theorem]{Definition}
\theoremstyle{remark}
\newtheorem{remark}[theorem]{Remark}
\newtheorem*{acknowledgements}{Acknowledgements}
\newcommand{\bA}{\mathbb{A}}
\newcommand{\bN}{\mathbb{N}}
\newcommand{\bQ}{\mathbb{Q}}
\newcommand{\bR}{\mathbb{R}}
\newcommand{\bZ}{\mathbb{Z}}
\newcommand{\cD}{\mathcal{D}}
\newcommand{\cG}{\mathcal{G}}
\newcommand{\cH}{\mathcal{H}}
\newcommand{\cO}{\mathcal{O}}
\newcommand{\cX}{\mathcal{X}}
\newcommand{\cZ}{\mathcal{Z}}
\newcommand{\fa}{\mathfrak{a}}
\newcommand{\fb}{\mathfrak{b}}
\newcommand{\fd}{\mathfrak{d}}
\newcommand{\fm}{\mathfrak{m}}
\newcommand{\fp}{\mathfrak{p}}
\DeclareMathOperator{\mld}{mld}
\DeclareMathOperator{\ord}{ord}
\DeclareMathOperator{\Spec}{Spec}
\begin{document}
\begin{abstract}
For a fixed pair and fixed exponents, we prove the discreteness of log discrepancies over all log canonical triples formed by attaching a product of ideals with given exponents.
\end{abstract}

\maketitle

\section{Introduction}
The log minimal model program (LMMP) is a program to find a good representative in each birational equivalence class of varieties by comparing the log canonical divisors. The log discrepancy, appearing in the relative log canonical divisor, is hence a fundamental invariant in the LMMP. For a triple $(X,\Delta,\fa)$, the \textit{log discrepancy} $a_E(X,\Delta,\fa)$ is attached to each divisor $E$ over $X$. The minimum of those $a_E(X,\Delta,\fa)$ with $E$ mapped onto a subset $Z$ of $X$ is called the \textit{minimal log discrepancy} at $\eta_Z$ and denoted by $\mld_{\eta_Z}(X,\Delta,\fa)$. Refer to Section \ref{sec:ld} for the precise definitions.

Shokurov conjectures the ascending chain condition (ACC) \cite{S88}, \cite[Conjecture 4.2]{S96} of the set of the minimal log discrepancies of all pairs with given coefficients in fixed dimension. Its importance is recognised in his reduction \cite{S04} of the termination of flips (in the relatively projective case) to this ACC and the lower semi-continuity of minimal log discrepancies. The main theorem of this paper is the \textit{discreteness} of log discrepancies over log canonical triples on a fixed pair. Let $\cD_X$ denote the set of divisors over $X$.

\begin{theorem}\label{thm:discrete}
Let $(X,\Delta)$ be a pair and $r_1,\ldots,r_k\in\bR_{\ge0}$. Then the set
\begin{align*}
\{a_E(X,\Delta,\prod_{j=1}^k\fa_j^{r_j})\mid\fa_j\subset\cO_X,\ E\in\cD_X,\ \textrm{$(X,\Delta,\prod_{j=1}^k\fa_j^{r_j})$ lc at $\eta_{c_X(E)}$}\}
\end{align*}
is discrete in $\bR$, where $c_X(E)$ is the centre of $E$ on $X$.
\end{theorem}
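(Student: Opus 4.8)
The plan is to argue by contradiction, so assume the set is not discrete. Passing to a subsequence, there are ideals $\fa_j^{(m)}\subset\cO_X$ and divisors $E_m\in\cD_X$ ($m\ge1$) such that each triple $(X,\Delta,\prod_j(\fa_j^{(m)})^{r_j})$ is lc at $\eta_{c_X(E_m)}$ and the values $a^{(m)}:=a_{E_m}(X,\Delta,\prod_j(\fa_j^{(m)})^{r_j})$ are pairwise distinct and converge to some $a_\infty\in\bR$; log canonicity forces $a^{(m)}\ge0$, so we may take $a^{(m)}\in[0,M]$ for a fixed $M$. I would then run a battery of standard reductions. Since log discrepancies and log canonicity are insensitive to localisation and completion, and $X$ is a finite union of strata, and since the triple being lc at $\eta_{c_X(E_m)}$ forces $(X,\Delta)$ itself to be lc there (as $a_F(X,\Delta)\ge a_F(X,\Delta,\prod\fa_j^{r_j})\ge0$), one reduces to the case where $X=\Spec R$ with $R$ complete regular local, $\Delta=\sum_l\beta_l\{x_l=0\}$ simple normal crossing with all $\beta_l\le1$, the common centre $c_X(E_m)$ equal to the closed point $\fm$, and every triple lc at $\fm$; passing to a fixed log resolution of $(X,\Delta)$ is what achieves the regularity of $R$ and the snc form of $\Delta$. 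Writing $\fb_j^{(m)}$ for the image of $\fa_j^{(m)}$ in $\cO_X$, in these coordinates
\[
a^{(m)}=A_{E_m}-\sum_l\beta_l\ord_{E_m}(x_l)-\tau^{(m)},\qquad \tau^{(m)}:=\sum_j r_j\ord_{E_m}(\fb_j^{(m)}),
\]
with $A_{E_m}:=a_{E_m}(X,0)\in\bZ_{\ge1}$, and, because $(X,\sum_l\{x_l=0\})$ is snc hence lc, $\sum_l\ord_{E_m}(x_l)\le A_{E_m}$. Thus $a_{E_m}(X,\Delta)=(A_{E_m}-\sum_l\ord_{E_m}(x_l))+\sum_l(1-\beta_l)\ord_{E_m}(x_l)$ is a $\bZ_{\ge0}$-linear combination of the fixed \emph{positive} reals $1$ and $\{1-\beta_l:\beta_l<1\}$, whence it — and likewise $\{\sum_j r_j n_j:n_j\in\bZ_{\ge0}\}$ — meets any bounded interval in a finite set. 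Consequently, if $\tau^{(m)}$ stayed bounded then $\tau^{(m)}$ and $a_{E_m}(X,\Delta)=a^{(m)}+\tau^{(m)}$ would take finitely many values and so would $a^{(m)}$, a contradiction; the whole theorem therefore reduces to showing that $\tau^{(m)}\to\infty$, i.e.\ $\ord_{E_m}(\fb_{j_0}^{(m)})\to\infty$ for some $j_0$ with $r_{j_0}>0$, is impossible. All the content is in this reduced statement: without the ideals, the paragraph above is the entire proof.

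To handle the reduced statement I would invoke the generic-limit machinery of de Fernex--Musta\c{t}\u{a} and Koll\'ar. After a further passage to a subsequence there are a field extension $K/k$ and ideals $\fb_j^\infty$ in the corresponding complete local ring over $K$ such that arbitrarily high $\fm$-adic truncations of $\fb_j^\infty$ agree, up to a field extension, with those of $\fb_j^{(m)}$ for infinitely many $m$. Beyond a sufficiently high truncation level a log resolution of $(X_K,\Delta_K,\prod_j(\fb_j^\infty)^{r_j})$ is simultaneously a log resolution of $(X,\Delta,\prod_j(\fb_j^{(m)})^{r_j})$ with identical exceptional data for infinitely many $m$, so log canonicity at the closed point passes to the limit: the generic-limit triple is again lc. Note that $\fb_j^\infty$ need not be $\fm$-primary — the generic limit of $(x_1,x_2^m)$ is $(x_1)$ — and this is precisely the mechanism by which $\ord_{E_m}(\fb_j^{(m)})$ can escape to infinity.

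Finally I would run a limiting-valuation argument. Assuming $\ord_{E_m}(\fb_{j_0}^{(m)})\to\infty$ one gets $A_{E_m}\to\infty$, and the normalised valuations $\ord_{E_m}/A_{E_m}$ take values in $[0,1]$ on each $x_l$ and in $[0,r_{j_0}^{-1}]$ on $\fb_{j_0}^{(m)}$; by compactness of the space of suitably normalised valuations centred in the closed fibre one extracts a limit $\nu_\infty$. Dividing the displayed identity by $A_{E_m}$ and letting $m\to\infty$ the left side tends to $0$, and matching $\fb_j^{(m)}$ with $\fb_j^\infty$ through the limit gives $\sum_l\beta_l\nu_\infty(x_l)+\sum_j r_j\nu_\infty(\fb_j^\infty)=1$; combined with the lower-semicontinuity bound $A(\nu_\infty)\le1$, the log canonicity of the generic-limit triple, and the fact that the log discrepancy of a valuation is at least the codimension of its centre, this pins $\nu_\infty$ down to the valuation $\ord_V$ of a prime divisor $V$ which is an lc centre of the generic-limit triple. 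Restricting to the generic point of $V$ and applying adjunction, the restricted triples remain lc, and one extracts from the $E_m$ divisors over $V$ whose log discrepancies over the restricted triple — again a triple on a fixed, lower-dimensional pair — differ from $a^{(m)}$ by a uniformly controlled quantity; induction on $\dim X$ puts these in a discrete set, which a convergent sequence of distinct $a^{(m)}$ cannot meet. The main obstacle is exactly this last step: carrying the orders $\ord_{E_m}(\fb_j^{(m)})$ faithfully through the simultaneous limit of the diverging divisors and the generic limit of the ideals (which requires coupling the truncation level to the growth of $A_{E_m}$), identifying $\nu_\infty$ as a genuine lc place, and organising the dimension induction cleanly through the different.
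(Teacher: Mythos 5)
Your opening reduction (complete local ring, snc boundary, and the observation that if $\tau^{(m)}=\sum_j r_j\ord_{E_m}\fb_j^{(m)}$ stays bounded then both $a_{E_m}(X,\Delta)$ and $\tau^{(m)}$ range over finite sets, forcing finiteness of $a^{(m)}$) is sound and is close in spirit to the bookkeeping the paper does at the end of its Section 4 with the finite set $A$. But everything after that is where the theorem actually lives, and there the proposal does not close. First, a structural point: $\tau^{(m)}\to\infty$ is \emph{not} an absurd configuration to be ruled out --- already for $X=\bA^2$, $\fa^{(m)}=(x)$, $r=1$ and $E_m$ with $\ord_{E_m}(x)=m$, $\ord_{E_m}(y)=1$ one has lc triples, $a^{(m)}=1$ and $\tau^{(m)}=m$. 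So the unbounded case genuinely occurs; the only thing that can be bounded is the ``irrational part'' of the orders. The paper isolates exactly this: choosing a $\bQ$-basis $r_0=1,r_1,\ldots,r_{k'}$ of the span of the exponents and rewriting $\prod_j\fa_j^{r_j}=\prod_{j'}\fb_{j'}^{r_{j'}}$, it shows (Lemmas \ref{lem:perturb} and \ref{lem:em}) that $\sum_{j'=1}^{k'}|\ord_{F_i}\fb_{ij'}|\le\varepsilon^{-1}a_i$, while the rational direction $\fb_{i0}$ is allowed to have unbounded order because it only contributes through $a_{F_i}(W_i,\fb_{i0})\in\frac1n\bZ$. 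Your framing, which keeps all of $\tau^{(m)}$ together, hides the one decomposition that makes the problem tractable.

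Second, the mechanism you propose for the unbounded case is not a proof. The key quantitative input of the paper is the perturbation argument: on the generic limit $W$, any lc place of $(W,\prod_j\fa_j^{r_j})$ has $\ord_E\fb_{j'}=0$ for $j'\ge1$ by $\bQ$-linear independence of the exponents, so the exponents can be perturbed by $\pm\varepsilon$ preserving log canonicity, and this is transferred back to the approximating triples $(W_i,\prod_j\fa_{ij}^{s_j})$ by the effective $\fm$-adic semicontinuity of Koll\'ar and de Fernex--Ein--Musta\c{t}\u{a} (Corollary \ref{cor:lct}); evaluating the perturbed log discrepancy of $F_i$ then gives the order bound directly. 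Nothing in your limiting-valuation scheme substitutes for this. Concretely: (i) matching $\ord_{E_m}\fb_j^{(m)}$ with $\nu_\infty(\fb_j^\infty)$ fails precisely when the orders diverge, since the generic limit only remembers truncations $\fb_j^{(m)}+\fm^l$ and $\ord$ is not continuous under them (your own example $(x_1,x_2^m)\to(x_1)$); you flag this as ``the main obstacle'' but it is the entire problem. (ii) The limit $\nu_\infty$ of $\ord_{E_m}/A_{E_m}$ need not be divisorial (monomial valuations with irrational weight ratios arise as such limits with $A(\nu_\infty)=1$), so ``pinning it down to a prime divisor $V$'' is false as stated. (iii) The adjunction-and-induction step presupposes a fixed lower-dimensional pair with controlled boundary and a way of producing divisors over $V$ from the $E_m$ with comparable log discrepancies; none of this is constructed, and it is not how the paper argues. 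As it stands the proposal proves the easy half of a case division and replaces the hard half with a programme whose intermediate claims are partly false and whose decisive step is acknowledged to be open.
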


Note that it is trivial in the case of rational boundary and exponents. The condition of log canonicity is necessary, see Remark \ref{rmk:condition}.

Theorem \ref{thm:discrete} asserts a special case of Shokurov's ACC conjecture.

\begin{theorem}\label{thm:finite}
Let $(X,\Delta)$ be a pair and $r_1,\ldots,r_k\in\bR_{\ge0}$. Then the set
\begin{align*}
\{\mld_{\eta_Z}(X,\Delta,\prod_{j=1}^k\fa_j^{r_j})\mid\fa_j\subset\cO_X,\ Z\subset X\}
\end{align*}
is finite.
\end{theorem}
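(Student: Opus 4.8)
The plan is to deduce Theorem~\ref{thm:finite} from Theorem~\ref{thm:discrete} by a soft argument, using that a set which is discrete in $\bR$ meets every bounded interval in a finite set. I will need two standard properties of minimal log discrepancies, presumably recorded in Section~\ref{sec:ld}. First, for a triple $(X,\Delta,\fa)$ and $Z\subset X$, the value $\mld_{\eta_Z}(X,\Delta,\fa)$ is $-\infty$ or a non-negative real number; in the latter case $(X,\Delta,\fa)$ is log canonical at $\eta_Z$ and the defining infimum is attained, i.e.\ $\mld_{\eta_Z}(X,\Delta,\fa)=a_E(X,\Delta,\fa)$ for some $E\in\cD_X$ with $c_X(E)=\overline{\{\eta_Z\}}$. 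Second, adjoining ideals with non-negative exponents only decreases log discrepancies, hence $\mld_{\eta_Z}(X,\Delta,\prod_{j=1}^k\fa_j^{r_j})\le\mld_{\eta_Z}(X,\Delta)$ for all $\fa_j\subset\cO_X$ and all $Z$.

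First I would set aside the value $-\infty$, which contributes a single element, and treat the real values. Fix ideals $\fa_1,\dots,\fa_k\subset\cO_X$ and $Z\subset X$ with $m:=\mld_{\eta_Z}(X,\Delta,\prod_j\fa_j^{r_j})\in\bR$. Then $m\ge0$, the triple is lc at $\eta_Z$, and a computing divisor $E\in\cD_X$ with $c_X(E)=\overline{\{\eta_Z\}}$ satisfies $\eta_{c_X(E)}=\eta_Z$ and $m=a_E(X,\Delta,\prod_j\fa_j^{r_j})$; as the triple is lc at $\eta_{c_X(E)}$, the number $m$ lies in the set appearing in Theorem~\ref{thm:discrete} for the same exponents $r_1,\dots,r_k$. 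Next I would bound the real values from above: by monotonicity $m\le\mld_{\eta_Z}(X,\Delta)$, and the set $\{\mld_{\eta_Z}(X,\Delta)\mid Z\subset X\}\setminus\{-\infty\}$ is finite, being computed on a fixed log resolution of $(X,\Delta)$ from the finitely many strata of its exceptional-plus-boundary divisor; so there is $N\in\bR$ with $m\le N$ for every choice. Hence every real value in Theorem~\ref{thm:finite} lies in the intersection of the discrete set of Theorem~\ref{thm:discrete} with $[0,N]$, which is finite; adjoining $-\infty$ gives Theorem~\ref{thm:finite}.

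I do not expect a serious obstacle here, since the argument is formal once Theorem~\ref{thm:discrete} is available — the substance of the paper is that theorem. The only points requiring standard but non-vacuous care are that $\mld_{\eta_Z}\ge0$ forces log canonicity at $\eta_Z$ and that a non-negative $\mld_{\eta_Z}$ is attained by a divisor with centre exactly $\overline{\{\eta_Z\}}$ — both are what let me place $m$ \emph{literally} inside the indexing set of Theorem~\ref{thm:discrete} — and the upper bound, for which, if one prefers, the general estimate $\mld_{\eta_Z}(X,\Delta,\fa)\le\dim X$ may be used in place of the monotonicity argument.
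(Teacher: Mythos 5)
Your proposal is correct and is essentially the paper's own argument, which deduces Theorem~\ref{thm:finite} from Theorem~\ref{thm:discrete} in one line by observing that the minimal log discrepancies in question are bounded above (the paper uses the maximum of $\mld_x(X,\Delta)$ over $x\in X$) and that a discrete subset of a bounded interval is finite. The extra care you take — attainment of the infimum by a divisor with centre $\overline{\{\eta_Z\}}$, the equivalence of $\mld_{\eta_Z}\ge0$ with log canonicity at $\eta_Z$, and the separate treatment of $-\infty$ — just makes explicit what the paper calls ``immediate.''
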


Theorem \ref{thm:finite} follows from Theorem \ref{thm:discrete} immediately since the minimal log discrepancies in Theorem \ref{thm:finite} are bounded from above by the maximum of $\mld_x(X,\Delta)$ for all $x\in X$.

We prove Theorem \ref{thm:discrete} in Section \ref{sec:discrete} by developing the study \cite{dFEM10}, \cite{dFEM11}, \cite{dFM09}, \cite{Kl08} of the ACC for \textit{log canonical thresholds} due to de Fernex, Ein, Musta\c{t}\u{a} and Koll\'ar. We use their construction of a \textit{generic limit} $(W,\fa)$, reviewed in Section \ref{sec:limit}, from a collection of bounded singularities $W_i$ and ideals $\fa_i$. $W$ is the spectrum of a complete local ring over an extension of the ground field. They showed the equivalence of the log canonicity of $(W,\fa)$ and general $(W_i,\fa_i)$ in order to obtain the ACC for log canonical thresholds on bounded singularities. We apply this equivalence to small perturbations of the exponents in $\fa$. It brings the boundedness of the orders appearing in the expression of $a_{F_i}(W_i,\fa_i)$, leading Theorem \ref{thm:discrete}.

Several extensions of Theorems \ref{thm:discrete}, \ref{thm:finite} and relevant remarks are given in Section \ref{sec:ext}. For example, applying to locally complete intersection (lci) singularities, we obtain the following in Corollary \ref{cor:lci}.

\begin{theorem}\label{thm:lci}
Fix an integer $d$ and $r_1,\ldots,r_k\in\bR_{\ge0}$. Then the set
\begin{align*}
\{\mld_{\eta_Z}(X,\prod_{j=1}^k\fa_j^{r_j})\mid\textrm{$X$ lci},\ \dim X\le d,\ \fa_j\subset\cO_X,\ Z\subset X\}
\end{align*}
is finite.
\end{theorem}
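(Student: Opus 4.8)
The plan is to reduce Theorem~\ref{thm:lci} to Theorem~\ref{thm:finite} by embedding the relevant lci singularity into a smooth ambient variety whose dimension is bounded in terms of $d$. Since $\mld_{\eta_Z}(X,\prod_{j}\fa_j^{r_j})$ is local around $\eta_Z$ and is insensitive to localisation, completion and extension of the ground field, we may reduce to the following situation: $X$ is lci of dimension at most $d$, $Z=\{x\}$ for a closed point $x$, and we wish to bound $\mld_{x}(X,\prod_{j}\fa_j^{r_j})$. It is enough to treat the case where this value is a real number, since $-\infty$ accounts for at most one further element of the set; in that case $(X,\prod_{j}\fa_j^{r_j})$, and in particular $X$ itself, is log canonical at $x$.

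The crucial extra input — and, I expect, the main obstacle — is a uniform bound on the embedding dimension: there should be an integer $N=N(d)$ with $\operatorname{embdim}_xX\le N$ for every lci $X$ with $\dim X\le d$ that is log canonical at $x$. I would derive this, in the circle of ideas of \cite{dFEM10}, \cite{dFEM11}, from a bound depending only on $d$ for the multiplicity $\operatorname{mult}_xX$ of such a singularity, using that the projectivised tangent cone at $x$ is a non-degenerate subscheme of dimension $\dim X-1$ in $\mathbb{P}^{\operatorname{embdim}_xX-1}$ whose degree is $\operatorname{mult}_xX$: a non-degenerate subscheme of bounded dimension and degree has bounded codimension. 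Granting this, after shrinking $X$ and choosing coordinates I may write $X=V(f_1,\dots,f_c)\subset\bA^{d+c}$ as a complete intersection with $c\le N(d)$, so the ambient dimension $d+c$ takes only finitely many values.

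Next I would invoke inversion of adjunction for lci varieties. Choosing general hypersurfaces $H_1,\dots,H_c$ in the linear system $\langle f_1,\dots,f_c\rangle$ — these still satisfy $H_1\cap\dots\cap H_c=X$, and by Bertini each partial intersection is lci and smooth away from $X$ — and iterating inversion of adjunction for a Cartier divisor in an lci ambient, one should obtain
\begin{align*}
\mld_{x}\Bigl(X,\ \prod_{j=1}^k\fa_j^{r_j}\Bigr)=\mld_{x}\Bigl(\bA^{d+c},\ I_{H_1}\cdots I_{H_c}\cdot\prod_{j=1}^k\tilde\fa_j^{r_j}\Bigr),
\end{align*}
where $\tilde\fa_j\subset\cO_{\bA^{d+c}}$ is a lift of $\fa_j$ and $I_{H_i}$ is the ideal of $H_i$. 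Establishing this identity — in particular controlling the singular intermediate varieties, which is precisely where the lci hypothesis enters — is the technical core; should it instead require a Jacobian correction, the right-hand side would acquire $\operatorname{Jac}_X$ with a fixed exponent depending only on $c$, which would not affect the argument below.

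The right-hand side is a minimal log discrepancy on the \emph{fixed} smooth variety $\bA^{d+c}$, of a product of $k+c$ ideals with exponents $r_1,\dots,r_k$ together with $c$ further ones equal to $1$ — a finite list independent of $X$, of the $\fa_j$ and of $Z$. Hence Theorem~\ref{thm:finite}, applied with $(\bA^{d+c},0)$ in place of $(X,\Delta)$ and with these exponents, shows that for each of the finitely many values of $c$ the corresponding minimal log discrepancies form a finite set; taking the union over $c$ and adjoining $-\infty$ proves Theorem~\ref{thm:lci}. An alternative to using the embedding-dimension bound as a black box is to re-run the generic limit construction of Sections~\ref{sec:limit} and~\ref{sec:discrete} for a varying sequence of lci singularities of dimension $\le d$ rather than on a fixed pair; but that re-running rests on exactly the same boundedness, so the obstacle is unchanged.
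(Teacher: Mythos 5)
Your proposal follows essentially the same route as the paper: bound the embedding dimension of lci log canonical singularities in terms of $d$, use Ein--Musta\c{t}\u{a}'s inversion of adjunction \cite{EM04} to transfer the minimal log discrepancy to the fixed smooth ambient space $\bA^{d+c}$ with the ideal of $X$ (to the power of the codimension) adjoined, and then apply Theorem \ref{thm:finite} to the finitely many resulting fixed pairs and exponent tuples. The only remark worth making is that the embedding-dimension bound you single out as the main obstacle is exactly \cite[Proposition 6.3]{dFEM10}, which gives $N(d)=2d$ and is itself proved by applying inversion of adjunction to the exceptional divisor of the blow-up at the point (using $\ord I_X\ge2$ there) rather than via multiplicities of tangent cones, so it is available as a black box and your tangent-cone sketch need not be made precise.
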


We work over an algebraically closed field $k$ of characteristic zero.
 
\section{Log discrepancies}\label{sec:ld}
A \textit{pair} $(X,\Delta)$ consists of a normal variety $X$ and a \textit{boundary} $\Delta$, that is, an effective $\bR$-divisor such that $K_X+\Delta$ is an $\bR$-Cartier $\bR$-divisor. We treat a \textit{triple} $(X,\Delta,\fa)$ by attaching a formal product $\fa=\prod_j\fa_j^{r_j}$ of finitely many coherent ideal sheaves $\fa_j$ with real exponents $r_j\in\bR_{\ge0}$. An \textit{extraction} of $X$ is a normal variety $X'$ with a proper birational morphism $\varphi\colon X'\to X$. A prime divisor $E$ on such an extraction $X'$ is called a divisor \textit{over} $X$, and the image $\varphi(E)$ on $X$ is called the \textit{centre} of $E$ on $X$ and denoted by $c_X(E)$. We denote by $\cD_X$ the set of divisors over $X$. We define the \textit{log discrepancy} of $E$ with respect to the triple $(X,\Delta,\fa)$ as
\begin{align*}
a_E(X,\Delta,\fa):=1+\ord_E(K_{X'}-\varphi^*(K_X+\Delta))-\ord_E\fa,
\end{align*}
where $\ord_E\fa:=\sum_jr_j\ord_E\fa_j$ for $\fa=\prod_j\fa_j^{r_j}$. The triple $(X,\Delta,\fa)$ is said to be \textit{log canonical} (\textit{lc}), \textit{Kawamata log terminal} (\textit{klt}) if $a_E(X,\Delta,\fa)\ge0$, $>0$ respectively for all $E\in\cD_X$, and said to be \textit{canonical}, \textit{terminal} if $a_E(X,\Delta,\fa)\ge1$, $>1$ respectively for all exceptional $E\in\cD_X$. Let $Z$ be an irreducible closed subset of $X$ and $\eta_Z$ its generic point. The \textit{minimal log discrepancy} $\mld_{\eta_Z}(X,\Delta,\fa)$ at $\eta_Z$ is the infimum of $a_E(X,\Delta,\fa)$ for all $E\in\cD_X$ with centre $Z$. It is either a non-negative real number or $-\infty$. The log canonicity of $(X,\Delta,\fa)$ about $\eta_Z$ is equivalent to $\mld_{\eta_Z}(X,\Delta,\fa)\ge0$. We say that $E\in\cD_X$ \textit{computes} $\mld_{\eta_Z}(X,\Delta,\fa)$ if $c_X(E)=Z$ and $a_E(X,\Delta,\fa)=\mld_{\eta_Z}(X,\Delta,\fa)$ (or negative when $\mld_{\eta_Z}(X,\Delta,\fa)=-\infty$). We are often reduced to the case when $Z$ is a closed point since $\mld_{\eta_Z}(X,\Delta,\fa)=\mld_z(X,\Delta,\fa)-\dim Z$ for general $z\in Z$, see \cite[Proposition 2.1]{A99}.

\section{Generic limits}\label{sec:limit}
The generic limit is a limit of ideals in a fixed local ring. It was constructed first by de Fernex and Musta\c{t}\u{a} in \cite{dFM09} using ultraproducts, and the construction was then simplified by Koll\'ar in \cite{Kl08}. It is clearly exposed in \cite[Section 4]{dFEM10}, \cite[Section 3]{dFEM11}.

Set $R=k[[x_1,\ldots,x_N]]$ with maximal ideal $\tilde\fm$. We fix integers $m$ and $k$. For every $l$, let $\cH_l$ be the Hilbert scheme parametrising ideals in $R$ containing $\tilde\fm^l$. Let $\cG$ be the parameter space for ideals in $R$ generated by polynomials in $\tilde\fm$ of degree $\le m$. Set $\cZ_l=\cG\times(\cH_l)^k$. We have a natural surjective map $t_l\colon\cZ_l\to\cZ_{l-1}$, and by generic flatness, there exists a stratification of $\cZ_l$ such that the restriction of $t_l$ on each stratum is a morphism.

We define a generic limit of the collection $\{(\tilde\fp_i;\tilde\fa_{i1},\ldots,\tilde\fa_{ik})\}_{i\in I}$ of $(k+1)$-tuples of ideals in $R$ indexed by an infinite set $I$, where $\tilde\fp_i$ are generated by polynomials in $\tilde\fm$ of degree $\le m$. One can construct locally closed irreducible subsets $Z_l^\circ\subset\cZ_l$ such that
\begin{enumerate}
\item
$t_l$ induces a dominant morphism $Z_l^\circ\to Z_{l-1}^\circ$,
\item
$I_l:=\{i\in I\mid(\tilde\fp_i;\tilde\fa_{i1}+\tilde\fm^l,\ldots,\tilde\fa_{ik}+\tilde\fm^l)\in Z_l^\circ\}$ is infinite,
\item
the set of points in $\cZ_l$ indexed by $I_l$ is dense in $Z_l^\circ$.
\end{enumerate}
We take the union $K=\bigcup_lk(Z_l^\circ)$ of the function fields by the inclusions $k(Z_{l-1}^\circ)\subset k(Z_l^\circ)$. For each $l$, the morphism $\Spec K\to Z_l^\circ\subset\cZ_l$ corresponds to a $(k+1)$-tuple $(\tilde\fp(l);\tilde\fa_1(l),\ldots,\tilde\fa_k(l))$ of ideals in $R_K=R\otimes_kK$. Then there exists a $(k+1)$-tuple $(\tilde\fp;\tilde\fa_1,\ldots,\tilde\fa_k)$ of ideals in $R_K$ such that $\tilde\fp(l)=\tilde\fp$ and $\tilde\fa_j(l)=\tilde\fa_j+\tilde\fm_K^l$, where $\tilde\fm_K=\tilde\fm R_K$. This $(\tilde\fp;\tilde\fa_1,\ldots,\tilde\fa_k)$ is a \textit{generic limit} of our collection $\{(\tilde\fp_i;\tilde\fa_{i1},\ldots,\tilde\fa_{ik})\}_{i\in I}$.

We set $W_i=\Spec R/\tilde\fp_i$, $W=\Spec R_K/\tilde\fp$ with closed points $o_i\in W_i$, $o\in W$, and $\fm_i=\tilde\fm/\tilde\fp_i$, $\fm=\tilde\fm_K/\tilde\fp$. We suppose $\tilde\fp_i\subset\tilde\fa_{ij}$, then $\tilde\fp\subset\tilde\fa_j$ and write $\fa_{ij}=\tilde\fa_{ij}/\tilde\fp_i$, $\fa_j=\tilde\fa_j/\tilde\fp$.

We compare log discrepancies over $W$ and $W_i$. The notions in Section \ref{sec:ld} are extended to the spectra of our complete local rings by the existence of their log resolutions due to Temkin in \cite{T08} after Hironaka. This extension is discussed in \cite{dFEM11}, \cite{dFM09} by de Fernex, Ein and Musta\c{t}\u{a}. The following proposition associates the minimal log discrepancy of the generic limit to those of the $\fm_i$-adic approximations of the original pairs. Proposition \ref{prp:limit} is a consequence of the basic fact that for a family of pairs, the minimal log discrepancy is constant on an open subfamily. The corresponding statement for log canonical thresholds is \cite[Proposition 4.4]{dFEM10} or \cite[Proposition 3.3]{dFEM11}. Our (\ref{itm:limit_computing}) is stronger, but it just needs the extra condition $l_E>\ord_E\fa_j$ for any $j$.

\begin{definition}
A subset $J_l$ of $I_l$ is said to be \textit{dense} if $J_l$ is infinite and the set of points in $\cZ_l$ indexed by $J_l$ is dense in $Z_l^\circ$.
\end{definition}

\begin{proposition}\label{prp:limit}
Suppose that $W_i$ has log terminal singularities. Then
\begin{enumerate}
\item
$W$ has log terminal singularities.
\item\label{itm:limit_mld}
For each $l$, there exists a dense subset $I_l^\circ$ of $I_l$ such that
\begin{align*}
\mld_o(W,\prod_j(\fa_j+\fm^l)^{r_j})=\mld_{o_i}(W_i,\prod_j(\fa_{ij}+\fm_i^l)^{r_j})
\end{align*}
for all $r_1,\ldots,r_k\in\bR_{\ge0}$ and all $i\in I_l^\circ$.
\item\label{itm:limit_computing}
Fix $r_1,\ldots,r_k\in\bR_{\ge0}$ and $E\in\cD_W$ computing $\mld_o(W,\prod_j\fa_j^{r_j})$. Then there exist an integer $l_E$, a dense subset $I_l^E$ of $I_l^\circ$ for each $l\ge l_E$, and $E_i\in\cD_{W_i}$ computing $\mld_{o_i}(W_i,\prod_j(\fa_{ij}+\fm_i^l)^{r_j})$ for each $i\in I_l^E$, such that
\begin{gather*}
\mld_o(W,\prod_j\fa_j^{r_j})=\mld_{o_i}(W_i,\prod_j(\fa_{ij}+\fm_i^l)^{r_j}),\\
\ord_E\fa_j=\ord_E(\fa_j+\fm^l)=\ord_{E_i}(\fa_{ij}+\fm_i^l)=\ord_{E_i}\fa_{ij}<l,
\end{gather*}
for all $i\in I_l^E$ with $l\ge l_E$.
\end{enumerate}
\end{proposition}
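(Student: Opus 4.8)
The plan is to adapt the family argument of de Fernex, Ein and Musta\c{t}\u{a} for log canonical thresholds (\cite[Proposition 4.4]{dFEM10}, \cite[Proposition 3.3]{dFEM11}), using the minimal log discrepancy in place of the threshold and, for (\ref{itm:limit_computing}), carrying the chosen divisor along. For (i) and (\ref{itm:limit_mld}) I would, over a refinement of the stratification of $\cZ_l$, form the universal flat family of pairs whose fibre over an $I_l$-indexed point is $(W_i,\prod_j(\fa_{ij}+\fm_i^l)^{r_j})$ and whose fibre over the generic point of $Z_l^\circ$ is $(W,\prod_j(\fa_j+\fm^l)^{r_j})$, resolve its total space, and use generic flatness and generic smoothness to arrange, after shrinking the base to a dense open $U$, a morphism that restricts on every fibre over $U$ to a log resolution of the corresponding pair, all of whose numerical data -- the relative canonical divisor, the orders of the exceptional-plus-boundary components along the ideals $\fa_{tj}+\fm_t^l$ and along $\fm_t$, the incidence structure of the simple normal crossing divisor, and the set of components lying over the section of closed points -- are constant along $U$. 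Since $\mld_o$ of such a pair is determined by this data (being computed by monomial valuations along strata of the resolution, and piecewise linear in $(r_j)$ for the fixed $l$), and log terminality is likewise read off from it, this gives (i) and, with $I_l^\circ$ the dense subset of $I_l$ indexing points of $U$, the equality in (\ref{itm:limit_mld}) for all exponents simultaneously.

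For (\ref{itm:limit_computing}), fix $r_1,\ldots,r_k$ and $E\in\cD_W$ computing $\mu:=\mld_o(W,\prod_j\fa_j^{r_j})$, where this means $a_E(W,\prod_j\fa_j^{r_j})<0$ if $\mu=-\infty$. Set $l_E:=1+\max_j\ord_E\fa_j$, so that $l_E>\ord_E\fa_j$ for every $j$. Since $\ord_E\fm\ge1$, for $l\ge l_E$ one has $\ord_E(\fa_j+\fm^l)=\min(\ord_E\fa_j,l\ord_E\fm)=\ord_E\fa_j<l$, hence $a_E(W,\prod_j(\fa_j+\fm^l)^{r_j})=a_E(W,\prod_j\fa_j^{r_j})$; combined with $a_F(W,\prod_j(\fa_j+\fm^l)^{r_j})\ge a_F(W,\prod_j\fa_j^{r_j})$ for all $F\in\cD_W$ (as $\fa_j\subset\fa_j+\fm^l$) and with the dichotomy that an mld is either non-negative or $-\infty$, this shows $\mld_o(W,\prod_j(\fa_j+\fm^l)^{r_j})=\mu$ with $E$ still computing it; by (\ref{itm:limit_mld}) this value equals $\mld_{o_i}(W_i,\prod_j(\fa_{ij}+\fm_i^l)^{r_j})$ for all $i\in I_l^\circ$.

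To produce $E_i$, fix $l\ge l_E$ and refine the fibrewise log resolution above by further blowups, defined over some finite level $Z_{l'}^\circ$, so that $E$ becomes a component on the generic fibre. Spreading these blowups out over a dense open $T\subset Z_{l'}^\circ$ -- shrinking $T$ so that the numerical data stays constant along $T$ and $T$ maps into $U$ -- yields a morphism that is fibrewise over $T$ a log resolution of $(W_i,\prod_j(\fa_{ij}+\fm_i^l)^{r_j})$ carrying a component $E_i$ over $W_i$ with $c_{W_i}(E_i)=o_i$. Let $I_l^E$ be the dense subset of $I_l^\circ$ indexing points of $T$. For $i\in I_l^E$, constancy of the data gives equality of discrepancies and $\ord_{E_i}(\fa_{ij}+\fm_i^l)=\ord_E(\fa_j+\fm^l)=\ord_E\fa_j<l$; since $\ord_{E_i}(\fa_{ij}+\fm_i^l)=\min(\ord_{E_i}\fa_{ij},l\ord_{E_i}\fm_i)$ with $\ord_{E_i}\fm_i\ge1$, the bound $<l$ forces $\ord_{E_i}(\fa_{ij}+\fm_i^l)=\ord_{E_i}\fa_{ij}$, so that
\begin{align*}
\ord_E\fa_j=\ord_E(\fa_j+\fm^l)=\ord_{E_i}(\fa_{ij}+\fm_i^l)=\ord_{E_i}\fa_{ij}<l,
\end{align*}
and $a_{E_i}(W_i,\prod_j(\fa_{ij}+\fm_i^l)^{r_j})=a_E(W,\prod_j(\fa_j+\fm^l)^{r_j})=\mu=\mld_{o_i}(W_i,\prod_j(\fa_{ij}+\fm_i^l)^{r_j})$, i.e. $E_i$ computes it.

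The step I expect to be the main obstacle is this last spreading-out: one must verify that the log resolution of the generic limit $W$, which lives only over the big field $K$ -- hence over a single level $Z_{l'}^\circ$ whose function field is not algebraically closed -- genuinely specialises, over a dense open of the base, to a fibrewise log resolution in which the distinguished divisor $E$ persists as a (component of an) irreducible divisor carrying the same orders. This is where generic flatness, generic smoothness, openness of the simple normal crossing condition, compatibility with base change along $Z_{l'}^\circ\to Z_l^\circ$, and a little care about geometric irreducibility over $k(Z_{l'}^\circ)$ all come in; it is routine but technical, and it is precisely the point at which the hypothesis $l_E>\ord_E\fa_j$ does its work, stabilising the truncated orders so that the data attached to $E$ over $W$ and to $E_i$ over $W_i$ agree.
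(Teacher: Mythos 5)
Your proposal is correct and follows essentially the same route as the paper, which itself gives no detailed proof but reduces the proposition to the de Fernex--Ein--Musta\c{t}\u{a} family argument (constancy of the minimal log discrepancy on an open subfamily, via a fibrewise log resolution with constant numerical data) together with the observation that (\ref{itm:limit_computing}) only needs $l_E>\ord_E\fa_j$ to stabilise the truncated orders. The spreading-out of the distinguished divisor $E$ to a finite level, which you rightly flag as the delicate step, is exactly the point the paper outsources to \cite[Appendix B]{dFEM11}.
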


\begin{remark}\label{rmk:common}
One can choose $l_{E(s)}$ and $I_l^{E(s)}$ in Proposition \ref{prp:limit}(\ref{itm:limit_computing}) commonly for a finite collection $\{(r_1(s),\ldots,r_k(s);E(s))\}_s$.
\end{remark}

We shall use the effective ideal-adic semi-continuity of log canonicity due to Koll\'ar, and de Fernex, Ein and Musta\c{t}\u{a}. They applied it to the ACC for log canonical thresholds on bounded singularities.

\begin{theorem}[{\cite[Theorem 32]{Kl08}, \cite[Theorem 1.4]{dFEM10}}]
Let $W=\Spec\widehat{\cO_{X,x}}$ with closed point $o$ for some log canonical singularity $x\in X$, and $\fa_1,\ldots,\fa_k,\fb_1,\ldots,\fb_k\subset\cO_W$, $r_1,\ldots,r_k\in\bR_{\ge0}$. Suppose $\mld_o(W,\prod_j\fa_j^{r_j})=0$ and it is computed by $E\in\cD_W$. If $\fa_j+\fp_j=\fb_j+\fp_j$ for every $j$, where $\fp_j=\{f\in\cO_W\mid\ord_Ef>\ord_E\fa_j\}$, then $\mld_o(W,\prod_j\fb_j^{r_j})=0$.
\end{theorem}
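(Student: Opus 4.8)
The plan is to split off the elementary part and then treat the genuine content by a deformation together with inversion of adjunction. First I would record what the hypothesis gives for free. Since $\fp_j=\{f\in\cO_W\mid\ord_Ef>\ord_E\fa_j\}$ is the valuation ideal of $E$ of value $\ord_E\fa_j+1$, writing any element of $\fa_j$ as an element of $\fb_j$ plus one of $\fp_j$, and conversely, and comparing orders along $E$, gives $\ord_E\fb_j=\ord_E\fa_j$ for every $j$; hence $\ord_E\prod_j\fb_j^{r_j}=\ord_E\prod_j\fa_j^{r_j}$ and $a_E(W,\prod_j\fb_j^{r_j})=a_E(W,\prod_j\fa_j^{r_j})=0$, so $\mld_o(W,\prod_j\fb_j^{r_j})\le0$ and the whole point is the reverse inequality, that is, that $(W,\prod_j\fb_j^{r_j})$ is log canonical near $o$. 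Replacing each $\fa_j$ by $\fa_j+\fp_j=\fb_j+\fp_j$ changes neither the log canonicity of $(W,\prod_j\fa_j^{r_j})$ (the ideal only grows) nor the equality $a_E(W,\prod_j\fa_j^{r_j})=0$ (the order along $E$ is unchanged, as $\ord_E\fp_j>\ord_E\fa_j$), so $E$ still computes $\mld_o=0$; after this replacement $\fb_j\subseteq\fa_j$, $\fa_j=\fb_j+\fp_j$, and $\fp_j\subseteq\fa_j$ is $\fm$-primary since $E$ has centre the closed point $o$.

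Next I would deform. Over $\bA^1$ with coordinate $t$, set $\fA_j:=\fb_j\cO_{W\times\bA^1}+t\,\fp_j\cO_{W\times\bA^1}$; the restriction of $\fA_j$ to the fibre over $t=0$ is $\fb_j$, while over every $t\ne0$ it is $\fb_j+\fp_j=\fa_j$. With $D:=W\times\{0\}$, a Cartier divisor for which $(K_{W\times\bA^1}+D)|_D=K_W$ with trivial different by the product structure, inversion of adjunction identifies log canonicity of $(W,\prod_j\fb_j^{r_j})$ near $o$ with log canonicity of $(W\times\bA^1,D+\prod_j\fA_j^{r_j})$ near the point $(o,0)$. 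It therefore suffices to prove the latter. This does not follow merely from log canonicity of $(W\times\bA^1,\prod_j\fA_j^{r_j})$, which holds away from $D$ for free since the fibres there are $(W,\prod_j\fa_j^{r_j})$: adding the boundary $D$ can destroy log canonicity along a special fibre, so the fact that $\fp_j$ is the valuation ideal of the chosen $E$ has to be used.

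For that I would monomialize along $E$. On a log resolution $\mu\colon Y\to W$ of $(W,\prod_j\fa_j\fp_j)$ on which $E$ occurs one has $\fa_j\cO_Y=\cO_Y(-A_j)$, $\fp_j\cO_Y=\cO_Y(-P_j)$ and $\fb_j\cO_Y=\cO_Y(-A_j)\,\fd_j$, where $P_j-A_j$ is effective, exceptional, and carries $E$ with coefficient exactly $1$, and $\fd_j$ is an ideal trivial at the generic point of $E$ and with $V(\fd_j)\subseteq\mathrm{Supp}(P_j-A_j)$. Let $\Gamma_Y$ be the $\bR$-divisor with $K_Y+\Gamma_Y=\mu^*K_W+\sum_jr_jA_j$, so that $(Y,\Gamma_Y)$ has the same log discrepancies as $(W,\prod_j\fa_j^{r_j})$; it is log canonical, and $E$ occurs in $\Gamma_Y$ with coefficient $1$ because $a_E(W,\prod_j\fa_j^{r_j})=0$. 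Pulling the family back to $Y\times\bA^1$ and blowing up further to resolve the $\fd_j$, log canonicity of $(W\times\bA^1,D+\prod_j\fA_j^{r_j})$ becomes, on each toric chart, a combinatorial inequality among the orders of the monomial data $A_j$, $P_j-A_j$, the special fibre $D$, and the now-resolved $\fd_j$. The inequality holds because $E$ being a log canonical place of $(W,\prod_j\fa_j^{r_j})$ forces the relevant supporting inequality to be an equality along the $E$-coordinate, and the coefficient $1$ of $E$ in $P_j-A_j$ is exactly what lets the deformation parameter $t$ absorb the excess contributed by $\fd_j$; the components of $\mathrm{Supp}(P_j-A_j)$ other than $E$ are not in $\lfloor\Gamma_Y\rfloor$, so there the log discrepancy for $\prod_j\fa_j^{r_j}$ is strictly positive and leaves slack. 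The main obstacle is carrying this out rigorously — keeping track of $D$ and the $\fd_j$ through the successive blow-ups and verifying the inequality uniformly over all charts, which at bottom is an induction on $\dim W$ via adjunction to $E$, where one must control the different and check that the restricted ideals still satisfy a valuation-ideal hypothesis so that the inductive step applies. Once this is done, every divisor over $W$ with centre $o$ has nonnegative log discrepancy for $\prod_j\fb_j^{r_j}$, which with the first paragraph gives $\mld_o(W,\prod_j\fb_j^{r_j})=0$.
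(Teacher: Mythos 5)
The paper does not prove this statement: it is quoted from Koll\'ar [Kl08, Theorem 32] and de Fernex--Ein--Musta\c{t}\u{a} [dFEM10, Theorem 1.4], so there is no internal proof to compare with, and your attempt must stand on its own. Your first two paragraphs are sound: the verification that $\ord_E\fb_j=\ord_E\fa_j$, hence $\mld_o(W,\prod_j\fb_j^{r_j})\le0$; the harmless replacement of $\fa_j$ by $\fa_j+\fp_j$; and the reformulation via the family $\fA_j=\fb_j+t\,\fp_j$ over $\bA^1$ together with precise inversion of adjunction along $D=W\times\{0\}$ (an acceptable external input, since the actual proofs also rely on inversion of adjunction). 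But, as you yourself observe, this reformulation is equivalent to the assertion to be proved, so all the content must sit in your third paragraph, and that paragraph does not close.

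Two concrete problems. First, the support claim is backwards: from $\fa_j\cO_Y=\fb_j\cO_Y+\fp_j\cO_Y$ and $\fa_j\cO_Y=\cO_Y(-A_j)$, $\fp_j\cO_Y=\cO_Y(-P_j)$, $\fb_j\cO_Y=\cO_Y(-A_j)\fd_j$, one gets $\cO_Y=\fd_j+\cO_Y(-(P_j-A_j))$, so $V(\fd_j)$ is \emph{disjoint} from $\mathrm{Supp}(P_j-A_j)$, not contained in it. Thus $\fd_j$ is cosupported in $\mu^{-1}(o)$ away from $E$ and away from all of $\mathrm{Supp}(P_j-A_j)$, and the mechanism you invoke (the coefficient $1$ of $E$ in $P_j-A_j$ letting $t$ absorb the excess of $\fd_j$) has nothing to act on where $\fd_j$ actually lives. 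Second, the ``slack'' argument is not quantitative: for a divisor $F$ over $V(\fd_j)$ with centre $o$ one knows $a_F(W,\prod_j\fa_j^{r_j})>0$, but $\ord_F\fd_j$ is a priori unbounded, so strict positivity cannot absorb $\sum_jr_j\ord_F\fd_j$. This is precisely the point where the hypothesis that $E$ computes $\mld_o=0$ must be upgraded to control over \emph{all} valuations centred at $o$, not just $\ord_E$; the cited proofs achieve this by extracting $E$ on a model $Y\to W$ with $-E$ relatively ample (via [BCHM10]), so that the valuation ideals $\mu_*\cO_Y(-mE)$, and hence $\fp_j$, bound $\ord_F$ for every such $F$, and then conclude by adjunction and inversion of adjunction along $E$ itself. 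Your sketch introduces neither this input nor a substitute (such as the summation theorem for multiplier ideals), and the final ``induction on $\dim W$ via adjunction to $E$'' is announced rather than carried out. As written, the hard implication $\mld_o(W,\prod_j\fb_j^{r_j})\ge0$ is not proved.
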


\begin{corollary}\label{cor:lct}
If $\mld_o(W,\prod_j\fa_j^{r_j})=0$ in Proposition \ref{prp:limit}(\ref{itm:limit_computing}), then $\mld_{o_i}(W_i,\prod_j\fa_{ij}^{r_j})
\linebreak 
=0$ for $i\in I_l^E$.
\end{corollary}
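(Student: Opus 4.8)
The plan is to deduce the corollary from the effective ideal-adic semi-continuity theorem \cite[Theorem 32]{Kl08}, \cite[Theorem 1.4]{dFEM10}, applied on $W_i$ with the divisor $E_i$ supplied by Proposition \ref{prp:limit}(\ref{itm:limit_computing}), comparing the ideals $\fa_{ij}+\fm_i^l$ and $\fa_{ij}$. First I would record what Proposition \ref{prp:limit}(\ref{itm:limit_computing}) gives once we impose $\mld_o(W,\prod_j\fa_j^{r_j})=0$: for every $l\ge l_E$ and $i\in I_l^E$ there is $E_i\in\cD_{W_i}$ computing $\mld_{o_i}(W_i,\prod_j(\fa_{ij}+\fm_i^l)^{r_j})$, this value equals $0$, and $\ord_{E_i}\fa_{ij}=\ord_{E_i}(\fa_{ij}+\fm_i^l)<l$ for every $j$. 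Since $W_i=\Spec R/\tilde\fp_i$ is the spectrum of the completed local ring of an algebraic singularity and is log terminal, hence log canonical, the cited theorem applies on $W_i$.

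Next, fixing such $l$ and $i$, I would put $\fp_{ij}=\{f\in\cO_{W_i}\mid\ord_{E_i}f>\ord_{E_i}(\fa_{ij}+\fm_i^l)\}$ as in the theorem and check the inclusion $\fm_i^l\subset\fp_{ij}$. Because $E_i$ computes a minimal log discrepancy at $o_i$, its centre on $W_i$ is $o_i$, so $\ord_{E_i}\fm_i\ge1$ and hence $\ord_{E_i}f\ge l$ for every $f\in\fm_i^l$; together with $\ord_{E_i}(\fa_{ij}+\fm_i^l)=\ord_{E_i}\fa_{ij}<l$ this yields $\ord_{E_i}f>\ord_{E_i}(\fa_{ij}+\fm_i^l)$, that is $f\in\fp_{ij}$. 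Consequently $(\fa_{ij}+\fm_i^l)+\fp_{ij}=\fa_{ij}+\fp_{ij}$ for every $j$.

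Finally I would invoke the cited theorem on $W_i$, taking its ideals $\fa_j$ to be $\fa_{ij}+\fm_i^l$, its ideals $\fb_j$ to be $\fa_{ij}$, and its divisor $E$ to be $E_i$: since $\mld_{o_i}(W_i,\prod_j(\fa_{ij}+\fm_i^l)^{r_j})=0$ is computed by $E_i$ and the ideal-adic matching just verified holds, the theorem delivers $\mld_{o_i}(W_i,\prod_j\fa_{ij}^{r_j})=0$, which is the claim. The step to watch is the strict bound $\ord_{E_i}\fa_{ij}<l$: it is exactly the strengthening of Proposition \ref{prp:limit}(\ref{itm:limit_computing}) over its log-canonical-threshold analogue, and it is precisely what pushes $\fm_i^l$ inside $\fp_{ij}$; without it the ideal-adic comparison would fail and the cited theorem could not be invoked.
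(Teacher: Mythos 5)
Your proposal is correct and is precisely the argument the paper intends (the corollary is stated without proof immediately after the cited semi-continuity theorem): apply that theorem on $W_i$ with $E=E_i$, $\fa_j=\fa_{ij}+\fm_i^l$, $\fb_j=\fa_{ij}$, using that $c_{W_i}(E_i)=o_i$ and the strict bound $\ord_{E_i}(\fa_{ij}+\fm_i^l)=\ord_{E_i}\fa_{ij}<l$ force $\fm_i^l\subset\fp_{ij}$. You also correctly identify the strict inequality $<l$ as the point of the strengthening in Proposition \ref{prp:limit}(\ref{itm:limit_computing}).
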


\section{Discreteness}\label{sec:discrete}
The purpose of this section is to prove Theorem \ref{thm:discrete}. The theorem is reduced to the case when $X$ has $\bQ$-factorial terminal singularities by the existence of a $\bQ$-factorial terminal extraction $\varphi\colon X'\to X$ with $\Delta'\ge0$ for $K_{X'}+\Delta'=\varphi^*(K_X+\Delta)$, thanks to \cite{BCHM10}. Then we may assume $\Delta=0$ by forcing $\prod_j\fa_{ij}^{r_j}$ to absorb $\Delta$. Moreover, we may consider only the log discrepancies of divisors whose centres are closed points. Hence it suffices to prove the following theorem.

\begin{theorem}\label{thm:stability}
Let $X$ be a variety with log terminal singularities and $a,r_1,\ldots,r_k\in\bR_{\ge0}$. Let $I$ be an infinite set indexing $a_i=a_{E_i}(X,\prod_j\fa_{ij}^{r_j})\le a$ with $\fa_{i1},\ldots,\fa_{ik}\subset\cO_X$, $E_i\in\cD_X$ such that $x_i=c_X(E_i)$ is a closed point at which $(X,\prod_j\fa_{ij}^{r_j})$ is log canonical. Then there exists an infinite subset $I^\circ\subset I$ such that $a_i$ is constant for $i\in I^\circ$.
\end{theorem}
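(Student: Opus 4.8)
The plan is to transport the problem to the spectra of the complete local rings $\widehat{\cO_{X,x_i}}$, take a generic limit $W$ of these together with the ideals $\fa_{ij}$ and a suitable auxiliary ideal, and then use the effective ideal-adic semi-continuity of log canonicity at small perturbations of the exponents to pin down a log canonical threshold; this forces the data defining $a_i$ into a finite set. Since $X$ is of finite type, after passing to an infinite subset of $I$ all $\widehat{\cO_{X,x_i}}$ are quotients $R/\tilde\fp_i$ of a fixed $R=k[[x_1,\dots,x_N]]$ with $\tilde\fp_i$ generated in bounded degree and $\tilde\fp_i\subset\tilde\fa_{ij}$; writing $W_i=\Spec R/\tilde\fp_i$ with closed point $o_i$, the ring $W_i$ is klt, $a_i=a_{E_i}(W_i,\prod_j\fa_{ij}^{r_j})$, and $(W_i,\prod_j\fa_{ij}^{r_j})$ is lc at $o_i$; the indices $j$ with $r_j=0$ may be discarded.

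For each $i$ I would attach, by the usual tie-breaking construction on a log resolution extracting $E_i$ (a product of powers of the exceptional ideal sheaves), an $\fm_{o_i}$-primary ideal $\fb_i$ such that, setting $n_i:=\ord_{E_i}\fb_i\in\bZ_{>0}$ and $\theta_i:=a_i/n_i\in[0,a]$, the triple $(W_i,\prod_j\fa_{ij}^{r_j}\fb_i^{\theta_i})$ is lc at $o_i$ with $\mld_{o_i}=0$ computed by $E_i$. Since $\fb_i\subset\fm_{o_i}$ forces $\ord_F\fb_i\ge1$ for every divisor $F$ with centre $o_i$, the quantity $\mld_{o_i}(W_i,\prod_j\fa_{ij}^{r_j}\fb_i^{t})$ is positive for $t<\theta_i$, zero for $t=\theta_i$, and negative for $t>\theta_i$. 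Now form the generic limit $(W;\fa_1,\dots,\fa_k,\fb)$ of $\{(\tilde\fp_i;\tilde\fa_{i1},\dots,\tilde\fa_{ik},\tilde\fb_i)\}_i$; $W$ is klt by Proposition~\ref{prp:limit}(i). Pass to an infinite subset on which $\theta_i\to\beta\in[0,a]$. If $\theta_i=\beta$ for infinitely many $i$, then $a_i=\beta n_i$ for those $i$ and $a_i\le a$ makes $a_i$ take only finitely many values (it is $0$ if $\beta=0$), so $a_i$ is constant on an infinite subset; hence we may assume $\theta_i\ne\beta$ for all $i$.

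Set $t^{*}:=\sup\{t\ge0:\mld_o(W,\prod_j\fa_j^{r_j}\fb^{t})\ge0\}$. For $t<\beta$, Proposition~\ref{prp:limit}(ii) gives $\mld_o(W,\prod_j(\fa_j+\fm^l)^{r_j}(\fb+\fm^l)^{t})=\mld_{o_i}(W_i,\prod_j(\fa_{ij}+\fm_i^l)^{r_j}(\fb_i+\fm_i^l)^{t})\ge\mld_{o_i}(W_i,\prod_j\fa_{ij}^{r_j}\fb_i^{t})$, and the right-hand side is positive for any $i\in I_l^\circ$ with $\theta_i>t$, of which there are infinitely many; so $(W,\prod_j\fa_j^{r_j}\fb^{t})$ is lc for $t<\beta$, whence $t^{*}\ge\beta$. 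At $t=t^{*}$ the triple $(W,\prod_j\fa_j^{r_j}\fb^{t^{*}})$ is log canonical with $\mld_o=0$, computed by some $E\in\cD_W$, so Corollary~\ref{cor:lct} (through Proposition~\ref{prp:limit}(iii)) yields $\mld_{o_i}(W_i,\prod_j\fa_{ij}^{r_j}\fb_i^{t^{*}})=0$ for all $i$ in a dense subset. By the dichotomy above, $\mld_{o_i}(W_i,\prod_j\fa_{ij}^{r_j}\fb_i^{t})=0$ only for $t=\theta_i$, so $\theta_i=t^{*}$ for every $i$ in that dense subset; as the subset is infinite and $\theta_i\to\beta$, this gives $\theta_i=t^{*}=\beta$ for infinitely many $i$, contradicting $\theta_i\ne\beta$. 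This completes the proof.

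The decisive step is the invocation of Corollary~\ref{cor:lct}: Proposition~\ref{prp:limit} by itself equates only minimal log discrepancies of the $\fm^l$-truncated triples, and truncation can only raise them, so it cannot propagate log canonicity from $W$ down to the general $W_i$; one genuinely needs the effective ideal-adic semi-continuity of Koll\'ar and de Fernex–Ein–Musta\c{t}\u{a}, whose ``$l\gg0$'' is governed by a divisor over $W$ rather than over $W_i$ and is therefore uniform in $i$. The perturbation of the exponent on $\fb$ is exactly what brings the generic limit onto the log canonical wall so that this effective statement applies. A subsidiary technical point, glossed over above, is to choose $\fb$ — equivalently, to control the colengths of the $\fb_i$ — so that the threshold value $\mld_o(W,\prod_j\fa_j^{r_j}\fb^{t^{*}})$ is genuinely $0$ and is computed by a divisor centred at $o$.
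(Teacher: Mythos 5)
Your route is genuinely different from the paper's (which perturbs the exponents $r_j$ along a $\bQ$-basis of the vector space they span, uses Corollary~\ref{cor:lct} to transfer log canonicity of the perturbed triples from $W$ to the $W_i$, and thereby bounds $\sum_{j'}|\ord_{F_i}\fb_{ij'}|$ so that $a_i$ lands in an explicit finite set $A$). Unfortunately the point you dismiss as ``a subsidiary technical point'' is precisely where your argument breaks, and I do not see how to repair it. Your tie-breaking ideals $\fb_i$ must make $E_i$ compute the log canonical threshold of $\fb_i$ relative to $(W_i,\prod_j\fa_{ij}^{r_j})$ among divisors centred at $o_i$; since the $E_i$ are arbitrary divisors with closed-point centres, this forces $\ord_{E_i}\fb_i$, and with it the colength of $\fb_i$, to be unbounded (already over a smooth surface, tie-breaking for a divisor obtained by $p$ blow-ups requires $\ord_{E_i}\fb_i\gtrsim a_{E_i}(W_i)$). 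Consequently the generic limit $\fb$ need not be $\fm$-primary: it can be zero, or a non-primary ideal such as the limit $(x)$ of $(x,y^{N_i})$ with $N_i\to\infty$. In either case the supremum $t^{*}$ is not realised as $\mld_o(W,\prod_j\fa_j^{r_j}\fb^{t^{*}})=0$ computed by a divisor centred at $o$ --- the threshold divisor may be centred on a positive-dimensional subvariety, leaving $\mld_o>0$ at $t=t^{*}$ --- and then the hypothesis of Corollary~\ref{cor:lct} (via Proposition~\ref{prp:limit}(\ref{itm:limit_computing})) is simply not satisfied. Since that invocation is, as you yourself say, the decisive step, the proof does not close.

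The parts of your argument that do work are instructive but do not carry the weight: the dichotomy in $t$ for each fixed $i$, the deduction that $(W,\prod_j\fa_j^{r_j}\fb^{t})$ is lc at $o$ for $t<\beta$ from the lc-ness of all truncations (note this only yields $\ord_E\fb<\infty$ divisor by divisor when $\beta>0$, not $\fm$-primariness), and the final bookkeeping $a_i=t^{*}n_i$ with $n_i\le a/t^{*}$. What is missing is any mechanism that controls the shape of $\fb$ uniformly in $i$, and this cannot be supplied by choosing the $\fb_i$ more carefully, because the obstruction is intrinsic to tie-breaking for divisors of unbounded discrepancy. The paper avoids the problem entirely by using the fixed ideal $\fm^{t_s}$ (whose limit is a power of $\fm$ itself) as the auxiliary ideal, and by extracting the discreteness not from a single threshold but from the $\bQ$-linear independence of $r_0=1,r_1,\ldots,r_{k'}$: on the limit $W$ every lc place $E$ has $\ord_E\fb_{j'}=0$ for $j'\ge1$, which is what licenses the $\pm\varepsilon$ perturbation and ultimately the bound of Lemma~\ref{lem:em}. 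Your proposal never engages with the arithmetic of the exponents, and some input of that kind appears unavoidable.
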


Since $X$ is covered by finitely many affine open subvarieties, we can fix integers $N$ and $m$ so that for each $i\in I$ there exists an ideal $\tilde\fp_i$ in $R=k[[x_1,\ldots,x_N]]$ generated by polynomials of degree $\le m$ which satisfies $\widehat\cO_{X,x_i}\simeq R/\tilde\fp_i$. We apply the generic limit construction in Section \ref{sec:limit} to the collection $\{(\tilde\fp_i;\tilde\fa_{i1},\ldots,\tilde\fa_{ik})\}_{i\in I}$; here we let $\fa_{ij}$ denote also the image in $R/\tilde\fp_i$ of $\fa_{ij}\widehat\cO_{X,x_i}$ by abuse of notation, and define $\tilde\fa_{ij}$ as the inverse image in $R$ of $\fa_{ij}$. The generic limit $(\tilde\fp;\tilde\fa_1,\ldots,\tilde\fa_k)$ is defined in some $R_K=R\otimes_kK$. We follow the notation in Section \ref{sec:limit}. We have $a_i=a_{F_i}(W_i,\prod_j\fa_{ij}^{r_j})$ for $F_i=E_i\times_XW_i$, and $(W_i,\prod_j\fa_{ij}^{r_j})$ is log canonical. By Proposition \ref{prp:limit}(\ref{itm:limit_mld}), $(W,\prod_j\fa_j^{r_j})$ is also log canonical.

We shall find perturbations of the exponents $r_j$ preserving the log canonicity. Set $r_0=1$. By permutation, we may assume that $r_0,\ldots,r_{k'}$ for some $0\le k'\le k$ form a basis of the $\bQ$-vector space spanned by $r_0,\ldots,r_k$. We write $r_j=\sum_{j'=0}^{k'}q_{jj'}r_{j'}$ with $q_{jj'}\in\bQ$, then $\prod_j\fa_j^{r_j}=\prod_{j'=0}^{k'}(\prod_j\fa_j^{q_{jj'}})^{r_{j'}}$ formally. We put $\fb_{j'}:=\prod_j\fa_{j}^{q_{jj'}}$ and $\fb_{ij'}:=\prod_j\fa_{ij}^{q_{jj'}}$. Setting $s_0=1$, for $\varepsilon>0$ we define the finite set
\begin{align*}
S_\varepsilon:=\{(s_1,\ldots,s_k)\mid s_j=\sum_{j'=0}^{k'}q_{jj'}s_{j'}\ \forall j,\ |s_{j'}-r_{j'}|=\varepsilon\ 1\le\forall j'\le k'\}.
\end{align*}

\begin{lemma}\label{lem:perturb}
There exist $\varepsilon$, $l$ and an dense subset $J_l\subset I_l$ such that all $s_j\ge0$ and $(W_i,\prod_j\fa_{ij}^{s_j})$ is log canonical for any $(s_j)\in S_\varepsilon$ and $i\in J_l$.
\end{lemma}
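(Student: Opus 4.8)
The plan is to exploit the effective ideal-adic semi-continuity of log canonicity (the cited theorem of Koll\'ar and de Fernex--Ein--Musta\c{t}\u{a}, packaged as Corollary \ref{cor:lct}) together with Proposition \ref{prp:limit}, applied not to the original exponents $r_j$ but to all the perturbed exponents in $S_\varepsilon$ simultaneously. First I would observe that since $(W,\prod_j\fa_j^{r_j})$ is log canonical and $W$ has log terminal singularities, the function $\varepsilon\mapsto\mld_o(W,\prod_j\fa_j^{s_j})$ varies continuously in the exponents $(s_{j'})$ near $(r_{j'})$; more precisely, using the basis $r_0,\dots,r_{k'}$ and the rewriting $\prod_j\fa_j^{s_j}=\prod_{j'=0}^{k'}\fb_{j'}^{s_{j'}}$, one has $\mld_o(W,\prod_{j'}\fb_{j'}^{s_{j'}})\ge 0$ for all $(s_{j'})$ in a small closed box around $(r_{j'})$, because log canonicity of a triple with an $\bR$-linear combination of fixed ideals is a closed condition on the exponents and $(r_{j'})$ lies in the interior of the lc locus after we note that... actually here we only get $\ge 0$, which is exactly log canonicity, so it suffices to choose $\varepsilon$ small enough that the finitely many points of $S_\varepsilon$ all lie in this box and all have $s_j\ge 0$ (possible since $r_j\ge 0$, shrinking $\varepsilon$ if some $r_j=0$ forces that coordinate, but the constraint $s_j=\sum q_{jj'}s_{j'}$ with $|s_{j'}-r_{j'}|=\varepsilon$ keeps $s_j$ close to $r_j$).

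Next, for this fixed finite set $S_\varepsilon$ of exponent vectors, I would apply Proposition \ref{prp:limit}(\ref{itm:limit_mld}) to each $(s_j)\in S_\varepsilon$: for each such vector and each $l$ there is a dense subset of $I_l$ on which $\mld_o(W,\prod_j(\fa_j+\fm^l)^{s_j})=\mld_{o_i}(W_i,\prod_j(\fa_{ij}+\fm_i^l)^{s_j})$. Since $S_\varepsilon$ is finite, intersecting these dense subsets over all $(s_j)\in S_\varepsilon$ yields, for a suitable $l$, a single dense subset $J_l\subset I_l$ on which this equality holds simultaneously for every $(s_j)\in S_\varepsilon$. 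Choosing $l$ large enough (larger than the relevant orders, as in the proposition and Remark \ref{rmk:common}), the right-hand side equals $\mld_{o_i}(W_i,\prod_j\fa_{ij}^{s_j})$ as well, since passing to $\fm_i^l$-truncations does not change the minimal log discrepancy once $l$ exceeds the orders of the computing divisors. Because the left-hand side $\mld_o(W,\prod_j\fa_j^{s_j})\ge 0$ by the first paragraph (and $\fm^l$-truncation again does not decrease it, or rather the proposition's equality already records the truncated value), we conclude $\mld_{o_i}(W_i,\prod_j\fa_{ij}^{s_j})\ge 0$, i.e. $(W_i,\prod_j\fa_{ij}^{s_j})$ is log canonical, for all $i\in J_l$ and all $(s_j)\in S_\varepsilon$.

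The main obstacle I anticipate is the bookkeeping needed to make the single $l$ and single dense $J_l$ work uniformly over all $|S_\varepsilon|$ exponent vectors at once while also being large enough that the $\fm^l$- and $\fm_i^l$-truncations are harmless for every vector in $S_\varepsilon$ — this is exactly the kind of uniformity that Remark \ref{rmk:common} is designed to supply, but one must check that a divisor computing the (possibly strictly positive) mld of a perturbed triple still has bounded order, which follows from log canonicity together with the boundedness inherent in the generic limit setup; alternatively, since we only need the inequality $\mld_{o_i}\ge 0$ rather than an exact value, one can invoke Corollary \ref{cor:lct} directly in the boundary case $\mld_o=0$ and use openness of klt-ness in the strict case $\mld_o>0$, splitting $S_\varepsilon$ accordingly. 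A secondary subtlety is ensuring $s_j\ge 0$: if some $r_{j'}=0$ then $|s_{j'}-r_{j'}|=\varepsilon$ forces $s_{j'}=\varepsilon>0$ (the definition takes the positive perturbation in that case, or one reads $|s_{j'}-r_{j'}|=\varepsilon$ as pinning $s_{j'}$ to one side), and the linear relations then keep each $s_j$ within $O(\varepsilon)$ of $r_j\ge 0$, so non-negativity holds for $\varepsilon$ small except possibly when $r_j=0$ and the coefficients conspire negatively — in that degenerate case one simply drops $\fa_{ij}$ from consideration or notes $r_j=0$ makes the factor trivial. None of this is deep; it is a matter of organizing the finitely many perturbations and the single truncation level.
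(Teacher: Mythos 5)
Your overall shape (first establish log canonicity of the perturbed triples on $W$, then transfer to the $W_i$) matches the paper, but both key steps are left unjustified, and in each case the missing ingredient is the actual content of the proof. In the first step you assert that $(r_{j'})$ lies in the interior of the lc locus in exponent space because log canonicity is ``a closed condition'' --- and your sentence trails off exactly where the justification should be. Closedness gives nothing: a point can sit on the boundary of a closed set (e.g.\ $\fa=(x)$ with exponent $1$ on a smooth surface is lc but not perturbable upward). What saves the day here is the $\bQ$-linear independence of $r_0=1,r_1,\ldots,r_{k'}$: if $a_E(W,\prod_j\fa_j^{r_j})=0$ then $a_E(W)=\sum_{j'}r_{j'}\ord_E\fb_{j'}$ is a $\bQ$-linear relation (all the $\ord_E\fb_{j'}$ and $a_E(W)$ being rational), forcing $\ord_E\fb_{j'}=0$ for $1\le j'\le k'$; so every lc place is insensitive to the perturbation, and on a fixed log resolution the finitely many divisors with strictly positive log discrepancy absorb a small $\varepsilon$. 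You never invoke this, and without it the claim is false in general.

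In the second step your main route fails for a direction-of-inequality reason. Proposition \ref{prp:limit}(\ref{itm:limit_mld}) only controls the truncated ideals $\fa_{ij}+\fm_i^l$, and since $\fa_{ij}\subset\fa_{ij}+\fm_i^l$ one has $\mld_{o_i}(W_i,\prod_j\fa_{ij}^{s_j})\le\mld_{o_i}(W_i,\prod_j(\fa_{ij}+\fm_i^l)^{s_j})$: log canonicity of the truncation does \emph{not} imply log canonicity of the original triple (a divisor $F$ with $\ord_F\fa_{ij}$ very large has its order capped at $l\ord_F\fm_i$ after truncation). Your claim that ``passing to $\fm_i^l$-truncations does not change the minimal log discrepancy once $l$ exceeds the orders of the computing divisors'' is exactly the non-trivial assertion that the effective ideal-adic semi-continuity theorem exists to address, and it only holds in the form of Corollary \ref{cor:lct}, i.e.\ when the limit mld is exactly $0$. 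The paper's device is to pad each perturbed triple with $\fm^{t_s}$ so that $\mld_o(W,\prod_j\fa_j^{s_j}\fm^{t_s})=0$, apply Corollary \ref{cor:lct} (uniformly over the finite set $S_\varepsilon$ via Remark \ref{rmk:common}) to get $\mld_{o_i}(W_i,\prod_j\fa_{ij}^{s_j}\fm_i^{t_s})=0$ for the \emph{untruncated} $\fa_{ij}$, and then drop the $\fm_i^{t_s}$ factor. Your fallback (``openness of klt-ness in the strict case'') suffers from the same truncation problem and does not rescue the argument; the $\fm^{t_s}$-padding trick is the missing idea.
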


\begin{proof}
First we find $\varepsilon$ such that all $s_j\ge0$ and $(W,\prod_j\fa_j^{s_j})$ is log canonical for any $(s_j)\in S_\varepsilon$. Indeed, if $E\in\cD_W$ has $a_E(W,\prod_j\fa_j^{r_j})=0$, then $a_E(W)=\sum_{j'=0}^{k'}r_{j'}\ord_E\fb_{j'}$, and thus $\ord_E\fb_{j'}=0$ for $1\le j'\le k'$ by the $\bQ$-linear independence of $r_{j'}$. This means that the log discrepancy of $E$ remains zero when we perturb the exponents $r_1,\ldots,r_{k'}$ in $\prod_{j'}\fb_{j'}^{r_{j'}}(=\prod_j\fa_j^{r_j})$. Hence on a fixed log resolution of $(W,\prod_j\fa_j^{r_j})$, every divisor remains to have non-negative log discrepancy by sufficiently small such perturbation, which guarantees the existence of the required $\varepsilon$.

For each $s=(s_j)\in S_\varepsilon$, we fix $t_s\ge0$ such that $\mld_o(W,\prod_j\fa_j^{s_j}\fm^{t_s})=0$. By Corollary \ref{cor:lct} and Remark \ref{rmk:common}, we obtain $l$ and $J_l\subset I_l$ such that $\mld_{o_i}(W_i,\prod_j\fa_{ij}^{s_j}\fm_i^{t_s})=0$ for any $s\in S_\varepsilon$ and $i\in J_l$, meaning the log canonicity of $(W_i,\prod_j\fa_{ij}^{s_j})$.
\end{proof}

\begin{lemma}\label{lem:em}
$\sum_{j'=1}^{k'}|\ord_{F_i}\fb_{ij'}|\le \varepsilon^{-1}a_i$ for $i\in J_l$.
\end{lemma}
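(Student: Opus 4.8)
The plan is to extract the bound from the nonnegativity of the log discrepancy of $F_i$ with respect to the perturbed triples furnished by Lemma \ref{lem:perturb}, after rewriting $a_i$ in terms of the exponents attached to the $\fb_{ij'}$. Since the order function is $\bQ$-linear in formal products and $r_j=\sum_{j'=0}^{k'}q_{jj'}r_{j'}$ with $\fb_{ij'}=\prod_j\fa_{ij}^{q_{jj'}}$, one has $\ord_{F_i}\prod_j\fa_{ij}^{r_j}=\sum_{j'=0}^{k'}r_{j'}\ord_{F_i}\fb_{ij'}$, and the same with $r$ replaced by any $s=(s_j)\in S_\varepsilon$. As $\Delta=0$ here, the definition of the log discrepancy then gives
\[
a_i=a_{F_i}(W_i)-\sum_{j'=0}^{k'}r_{j'}\ord_{F_i}\fb_{ij'},
\]
and, for any $s\in S_\varepsilon$,
\[
a_{F_i}\Bigl(W_i,\prod_j\fa_{ij}^{s_j}\Bigr)=a_{F_i}(W_i)-\sum_{j'=0}^{k'}s_{j'}\ord_{F_i}\fb_{ij'}.
\]

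Next I would subtract these identities. The $j'=0$ terms cancel because $s_0=r_0=1$, leaving
\[
a_i-a_{F_i}\Bigl(W_i,\prod_j\fa_{ij}^{s_j}\Bigr)=\sum_{j'=1}^{k'}(s_{j'}-r_{j'})\ord_{F_i}\fb_{ij'}.
\]
In the definition of $S_\varepsilon$ the constraint $|s_{j'}-r_{j'}|=\varepsilon$ is imposed separately for each $1\le j'\le k'$, so for the fixed index $i$ I may choose $s\in S_\varepsilon$ such that $s_{j'}-r_{j'}$ has the same sign as $\ord_{F_i}\fb_{ij'}$ (the choice being immaterial when that order vanishes); for this $s$ the right-hand side equals $\varepsilon\sum_{j'=1}^{k'}|\ord_{F_i}\fb_{ij'}|$. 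Finally, for $i\in J_l$ the triple $(W_i,\prod_j\fa_{ij}^{s_j})$ is log canonical by Lemma \ref{lem:perturb}, hence $a_{F_i}(W_i,\prod_j\fa_{ij}^{s_j})\ge0$ since $F_i\in\cD_{W_i}$, so the left-hand side of the last display is at most $a_i$. Dividing by $\varepsilon$ gives $\sum_{j'=1}^{k'}|\ord_{F_i}\fb_{ij'}|\le\varepsilon^{-1}a_i$.

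I do not expect a genuine obstacle here: once the change of variables and the two expansions of the log discrepancy are written down, the argument is pure bookkeeping. The only points that deserve a line of justification are that $S_\varepsilon$ does contain a tuple realising the sign pattern of $(\ord_{F_i}\fb_{ij'})_{j'}$, which is clear because the sign constraints in $S_\varepsilon$ are coordinatewise, and that Lemma \ref{lem:perturb} delivers log canonicity uniformly over all of $S_\varepsilon$, so it covers whichever sign pattern the given $i$ forces.
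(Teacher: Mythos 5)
Your proof is correct and follows essentially the same route as the paper: choose the signs of $s_{j'}-r_{j'}$ to match those of $\ord_{F_i}\fb_{ij'}$, extend to a tuple in $S_\varepsilon$, and use the log canonicity from Lemma \ref{lem:perturb} to bound $a_i-\varepsilon\sum_{j'=1}^{k'}|\ord_{F_i}\fb_{ij'}|=a_{F_i}(W_i,\prod_j\fa_{ij}^{s_j})\ge0$. The paper states this in one line; you have merely written out the bookkeeping explicitly.
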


\begin{proof}
We choose $s_{ij'}=r_{j'}\pm\varepsilon$ for $1\le j'\le k'$ so that $\ord_{F_i}\fb_{ij'}/(s_{ij'}-r_{j'})\ge0$, and extend $(s_{ij'})$ to the $k$-tuple $(s_{ij})\in S_\varepsilon$. Then by Lemma \ref{lem:perturb}, $0\le a_{F_i}(W_i,\prod_j\fa_{ij}^{s_{ij}})=a_i-\varepsilon\sum_{j'=1}^{k'}|\ord_{F_i}\fb_{ij'}|$.
\end{proof}

Fix a positive integer $n$ such that $nK_X$ is a Cartier divisor and $nq_{jj'}\in\bZ$ for all $j,j'$. We define the finite set
\begin{align*}
A:=[0,a]\cap\Bigl(\frac{1}{n}\bZ+\bigl\{\sum_{j'=1}^{k'}r_{j'}m_{j'}\bigm|\sum_{j'=1}^{k'}|m_{j'}|\le\varepsilon^{-1}a,\ m_{j'}\in\frac{1}{n}\bZ\ \forall j'\bigr\}\Bigr).
\end{align*}
Then by Lemma \ref{lem:em}, $a_i=a_{F_i}(W_i,\prod_{j'=0}^{k'}\fb_{ij'}^{r_{j'}})=a_{F_i}(W_i,\fb_{i0})-\sum_{j'=1}^{k'}r_{j'}\ord_{F_i}\fb_{ij'}\in A$ for $i\in J_l$. Theorem \ref{thm:stability}, and Theorem \ref{thm:discrete}, are therefore completed.

\section{Extensions}\label{sec:ext}
First we remark the need of log canonicity in Theorem \ref{thm:discrete}.

\begin{remark}\label{rmk:condition}
Consider a non-lc pair $(\bA^2,(1+r)l)$ where $r>0$ is irrational and $l$ is a line. Let $E_1$ be the exceptional divisor of the blow-up of $\bA^2$ at a point on $l$, and define $E_p$ inductively as the exceptional divisor of the blow-up at the intersection of $E_{p-1}$ and the strict transform of $l$. Let $E_{p,0}=E_p$ and define $E_{p,q}$ inductively as the exceptional divisor of the blow-up at a general point on $E_{p,q-1}$. Then $a_{E_{p,q}}(\bA^2,(1+r)l)=q-pr$. The set of these log discrepancies is dense in $\bR$.
\end{remark}

The generic limit construction is applicable to bounded singularities in the sense \cite{dFEM11} of de Fernex, Ein and Musta\c{t}\u{a}. We say that a collection $\{x_i\in X_i\}_i$ of singularities is \textit{bounded} if there exist $m$ and $N$ such that for each $i$ there exists an ideal $\tilde\fp_i$ in $R=k[[x_1,\ldots,x_N]]$ generated by polynomials of degree $\le m$ which satisfies $\widehat\cO_{X_i,x_i}\simeq R/\tilde\fp_i$. Theorem \ref{thm:discrete} is formulated for such a collection.

\begin{theorem}
Let $\cX$ be a collection of varieties with bounded log terminal singularities, and $r_1,\ldots,r_k\in\bR_{\ge0}$. Then the set
\begin{align*}
\{a_E(X,\prod_{j=1}^k\fa_j^{r_j})\mid X\in\cX,\ \fa_j\subset\cO_X,\ E\in\cD_X,\ \textrm{$(X,\prod_{j=1}^k\fa_j^{r_j})$ lc at $\eta_{c_X(E)}$}\}
\end{align*}
is discrete.
\end{theorem}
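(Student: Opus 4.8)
The plan is to reduce this statement for bounded log terminal singularities to Theorem~\ref{thm:discrete}'s proof scheme, which in fact was already carried out in a form that only used boundedness. Recall that the proof of Theorem~\ref{thm:discrete} in Section~\ref{sec:discrete} proceeded by first reducing to $\bQ$-factorial terminal $X$ with $\Delta=0$ via \cite{BCHM10}, and then reducing to Theorem~\ref{thm:stability}. The only place where ``a fixed pair $(X,\Delta)$'' entered was to guarantee that the singularities $\widehat\cO_{X,x_i}\simeq R/\tilde\fp_i$ are bounded, with uniform $N$ and $m$. So first I would observe that, given a collection $\cX$ of varieties with bounded log terminal singularities, the closed points $x\in X$ for $X\in\cX$ again form a bounded collection of log terminal singularities (boundedness of the singularities on each $X\in\cX$ is uniform by hypothesis), and this is exactly the input needed for the generic limit construction.

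The one subtlety is the reduction to terminal $\bQ$-factorial singularities: applying \cite{BCHM10} to each $X\in\cX$ produces an extraction $\varphi_X\colon X'\to X$ with $\Delta'_X\ge0$ and $K_{X'}+\Delta'_X=\varphi_X^*K_X$, and a log discrepancy $a_E(X,\prod_j\fa_j^{r_j})$ over $X$ equals $a_E(X',\Delta'_X,\prod_j(\fa_j\cO_{X'})^{r_j})$. Thus I would need that the collection $\{X'\}_{X\in\cX}$, together with the associated boundaries $\Delta'_X$, again forms a bounded collection. The $\bQ$-factorial terminalisation is not canonical, so one should argue that it can be taken within a bounded family: since $X$ ranges over singularities of bounded embedding data $R/\tilde\fp_i$, one may spread out each such complete local singularity to an algebraic model over a finite type base, take relative $\bQ$-factorial terminalisations in that family (using \cite{BCHM10} in the relative setting), and complete; this yields a bounded collection of terminal $\bQ$-factorial singularities carrying bounded boundaries $\Delta'$. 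The boundary $\Delta'$ is then absorbed into the product of ideals as in Section~\ref{sec:discrete}, say by writing each component of $\Delta'$ as $\mathrm{div}$ of a section of a bounded Cartier divisor and enlarging $k$ by the number of such components and the exponents accordingly, uniformly over the bounded family.

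After this reduction, the argument of Theorem~\ref{thm:stability} applies verbatim. That is: fix $N$ and $m$ for the bounded collection, form the generic limit $(\tilde\fp;\tilde\fa_1,\ldots,\tilde\fa_k)$ of $\{(\tilde\fp_i;\tilde\fa_{i1},\ldots,\tilde\fa_{ik})\}_{i\in I}$ for an infinite index set $I$ realising log discrepancies $a_i\le a$, obtain log canonicity of the generic limit $(W,\prod_j\fa_j^{r_j})$ by Proposition~\ref{prp:limit}(\ref{itm:limit_mld}), perturb the exponents using the $\bQ$-linear independence trick to get $\varepsilon$, $l$ and a dense $J_l\subset I_l$ with $(W_i,\prod_j\fa_{ij}^{s_j})$ log canonical for $(s_j)\in S_\varepsilon$ (Lemma~\ref{lem:perturb}), bound $\sum_{j'}|\ord_{F_i}\fb_{ij'}|\le\varepsilon^{-1}a_i$ (Lemma~\ref{lem:em}), and conclude $a_i$ lies in the finite set $A$ of Section~\ref{sec:discrete}. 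Hence $a_i$ is constant on an infinite subset, which gives discreteness of the set of log discrepancies. The main obstacle is the bounded $\bQ$-factorial terminalisation step: one must check that \cite{BCHM10} can be applied in a family over a finite type base covering the given bounded collection of complete local singularities, so that the resulting terminal models and their boundaries are again bounded; everything after that is a transcription of the proof already given.
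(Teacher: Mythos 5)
Your core observation is right and matches what the paper intends: the theorem is proved by running the generic limit argument of Section~\ref{sec:discrete} for a bounded collection instead of a single variety, and the only input that construction needs is exactly the boundedness hypothesis (uniform $N$ and $m$ with $\widehat\cO_{X,x_i}\simeq R/\tilde\fp_i$). However, the step you single out as ``the main obstacle'' --- producing $\bQ$-factorial terminalisations in bounded families via a relative application of \cite{BCHM10} and re-absorbing the resulting boundaries --- rests on a misreading of the statement. The theorem is asserted only for varieties with \emph{log terminal} singularities and carries \emph{no} boundary $\Delta$; it is therefore the bounded-collection analogue of Theorem~\ref{thm:stability}, not of Theorem~\ref{thm:discrete}. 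The BCHM extraction in Section~\ref{sec:discrete} exists solely to replace a possibly non-klt pair $(X,\Delta)$ by a terminal variety without boundary before invoking Theorem~\ref{thm:stability}; under the present hypotheses Proposition~\ref{prp:limit} applies directly to the $W_i=\Spec R/\tilde\fp_i$, so the entire terminalisation discussion is vacuous. This matters because, as you write it, that step is not proved --- spreading out complete local rings, running a relative MMP over a finite-type base, and verifying that the output is again a bounded collection with bounded boundaries is a substantial claim --- so your argument as stated would have a genuine gap precisely where none is needed.

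The uniformity you should have checked instead occurs at the end of the proof of Theorem~\ref{thm:stability}: the finite set $A$ is defined using an integer $n$ with $nK_X$ Cartier, which for a single variety exists trivially but for a collection $\cX$ requires the Cartier index of $K_X$ at the points $x_i$ to be bounded over the whole collection. This does follow from boundedness --- the relevant singularities are parametrised by the finite-type scheme $\cG$, which can be stratified (by generic flatness and Noetherian induction) so that the index of $K$ at the marked point is constant on each stratum, hence takes finitely many values --- but without some such remark the final step is not literally a verbatim transcription. Everything else in your outline (log canonicity of the generic limit via Proposition~\ref{prp:limit}(\ref{itm:limit_mld}), the perturbation Lemma~\ref{lem:perturb}, the order bound of Lemma~\ref{lem:em}, and membership of $a_i$ in $A$) carries over unchanged, as does the preliminary reduction to divisors whose centres are closed points.
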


We apply it to the minimal log discrepancies of Gorenstein singularities.

\begin{corollary}
Let $\cX$ be a collection of varieties with bounded normal Gorenstein singularities, and $r_1,\ldots,r_k\in\bR_{\ge0}$. Then the set
\begin{align*}
\{\mld_{\eta_Z}(X,\prod_{j=1}^k\fa_j^{r_j})\mid X\in\cX, \fa_j\subset\cO_X,\ Z\subset X\}
\end{align*}
is finite.
\end{corollary}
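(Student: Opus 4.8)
The plan is to deduce this corollary from the discreteness statement for bounded log terminal singularities, exactly as Theorem \ref{thm:finite} was deduced from Theorem \ref{thm:discrete}. First I would reduce to the case where $Z=z$ is a closed point, using the formula $\mld_{\eta_Z}(X,\prod_j\fa_j^{r_j})=\mld_z(X,\prod_j\fa_j^{r_j})-\dim Z$ for general $z\in Z$, together with the fact that $\dim Z$ ranges over the finite set $\{0,1,\ldots,d\}$; it therefore suffices to show that the set of $\mld_z(X,\prod_j\fa_j^{r_j})$ with $x=z$ running through the bounded collection of Gorenstein singularities, and $Z=z$ a closed point, is finite. Finiteness will follow from discreteness once we bound these minimal log discrepancies from above: since $X$ is normal and Gorenstein, hence in particular canonical is not automatic, but $\mld_z(X)\le\mld_z(X,\prod_j\fa_j^{r_j})$ fails—rather we use $\mld_z(X,\prod_j\fa_j^{r_j})\le\mld_z(X)$, and $\mld_z(X)\le\dim X\le d$ whenever $X$ is log terminal (and $\le\dim X$ in general when the mld is finite). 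So the whole statement reduces to the discreteness of the $a_E(X,\prod_j\fa_j^{r_j})$ over this collection when the triple is lc at the generic point of the centre.

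Next I would address the gap between \emph{Gorenstein} and \emph{log terminal}: the earlier discreteness theorem for bounded singularities is stated only for log terminal $X$. The point is that over a normal Gorenstein singularity $x\in X$, if the triple $(X,\prod_j\fa_j^{r_j})$ is log canonical at $\eta_Z$ and we are computing a \emph{finite} minimal log discrepancy, then the underlying pair $(X,0)$ is automatically log canonical there (because $a_E(X,\prod_j\fa_j^{r_j})\le a_E(X)$), and for a Gorenstein variety log canonical is equivalent to canonical in the sense that—no. Instead I would simply pass to a $\bQ$-factorial terminal (indeed terminal, since Gorenstein) modification $\varphi\colon X'\to X$ as in Section \ref{sec:discrete}, available by \cite{BCHM10}, writing $K_{X'}=\varphi^*K_X$ because $X$ is Gorenstein and canonical where the mld is computed; then $\mld_{\eta_Z}(X,\prod_j\fa_j^{r_j})$ equals an mld over $X'$, and the collection $\{X'\}$ inherits boundedness. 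One subtlety: boundedness of $\{X'_i\}$ is not formal, but in our setting each relevant computation is local near $x$, and a terminal modification of a bounded family of Gorenstein singularities is again bounded after possibly enlarging $m,N$—this is where I would be most careful.

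With these reductions in place, I would invoke the discreteness theorem for bounded log terminal singularities applied to the collection $\{X'_i\}$, obtaining that $\{a_E(X',\prod_j\fa_j^{r_j})\}$ is discrete, hence that $\{\mld_z(X,\prod_j\fa_j^{r_j})\}$ is discrete; combined with the uniform upper bound $\le d$ and the lower bound $\ge 0$ from log canonicity, a discrete subset of the bounded interval $[0,d]$ is finite. Running $\dim Z$ over $\{0,\ldots,d\}$ and translating by $-\dim Z$ keeps the union finite, which proves the corollary.

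The main obstacle I expect is precisely the passage from the Gorenstein hypothesis to the log terminal hypothesis of the quoted bounded discreteness theorem, i.e. controlling the terminalization uniformly over the bounded family. If one prefers to avoid \cite{BCHM10} here, the alternative is to prove a Gorenstein (or even canonical) version of the bounded discreteness theorem directly, redoing Section \ref{sec:discrete} with ``log terminal'' replaced by ``log canonical and of finite mld'', which works because Proposition \ref{prp:limit} and Corollary \ref{cor:lct} only use log canonicity of the limit and the approximants, not log terminality—log terminality was used solely to guarantee finiteness of the mld, which here is hypothesised directly. I would flag that the cleanest route is the latter: observe that the proof of Theorem \ref{thm:stability} never genuinely needs $X$ log terminal, only that the relevant $(W_i,\prod_j\fa_{ij}^{r_j})$ be log canonical with $a_i$ finite, so the Gorenstein case follows with no new ideas.
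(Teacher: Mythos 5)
Your overall architecture --- discreteness of the log discrepancies plus a uniform upper bound on the minimal log discrepancies gives finiteness --- is the same as the paper's, and your ``cleanest route'' for the log terminal versus log canonical issue (observing that Proposition \ref{prp:limit} and Corollary \ref{cor:lct} do not genuinely need log terminality in the Gorenstein setting) is essentially what the paper does: it notes that \cite[Appendix B]{dFEM11} is unnecessary for Gorenstein singularities, so Proposition \ref{prp:limit}(\ref{itm:limit_mld}), (\ref{itm:limit_computing}) still hold for Gorenstein log canonical singularities. Your alternative route through a terminalization of the whole bounded collection is the weaker option, for the reason you yourself flag: boundedness of the family of terminalizations is not established, and the paper does not go that way.

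The genuine gap is in the upper bound. You bound $\mld_z(X,\prod_j\fa_j^{r_j})$ by $\mld_z(X)$ (which is fine) and then assert $\mld_z(X)\le\dim X\le d$ for log terminal $X$. That inequality is Shokurov's open conjecture on the boundedness of minimal log discrepancies; it is known for smooth points, for toric and lci varieties, and in low dimensions, but not for general normal Gorenstein (or log terminal) singularities, and the bounded-embedding-dimension hypothesis does not rescue it by elementary means. This is precisely why the paper invokes \cite[Theorem 2.2]{K11}, which proves via Riemann--Roch that minimal log discrepancies of Gorenstein singularities with bounded embedding dimension are bounded --- by a constant depending on the embedding dimension, not by $\dim X$ --- and notes that this theorem persists for Gorenstein log canonical singularities by its proof. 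Without that input your argument produces a discrete set with no upper bound, hence no finiteness. Note also that the deduction of Theorem \ref{thm:finite} from Theorem \ref{thm:discrete}, on which you model your reduction, only needs $\mld_x(X,\Delta)$ to be bounded on a single fixed $X$; for a bounded collection of varieties the corresponding uniform bound is a theorem, not a formality.
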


This follows from the boundedness \cite[Theorem 2.2]{K11} of the minimal log discrepancies of Gorenstein singularities with bounded embedding dimensions. Note that even for Gorenstein log canonical singularities, \cite[Theorem 2.2]{K11} holds by its proof, and Proposition \ref{prp:limit}(\ref{itm:limit_mld}), (\ref{itm:limit_computing}) hold since \cite[Appendix B]{dFEM11} is unnecessary.

We have a further application to lci singularities.

\begin{corollary}\label{cor:lci}
Fix an integer $d$ and $r_1,\ldots,r_k\in\bR_{\ge0}$. Then the set
\begin{align*}
\{a_E(X,\prod_{j=1}^k\fa_j^{r_j})\mid\textrm{$X$ lci},\ \dim X\le d,\ \fa_j\subset\cO_X,\ E\in\cD_X,\ \textrm{$(X,\prod_{j=1}^k\fa_j^{r_j})$ lc at $\eta_{c_X(E)}$}\}
\end{align*}
is discrete.
\end{corollary}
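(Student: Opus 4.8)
The plan is to reduce Corollary \ref{cor:lci} to the bounded-singularities version of Theorem \ref{thm:discrete} stated just above it. The key input is that lci singularities of bounded dimension, although \emph{not} bounded in the sense of the paper (their embedding dimension is unbounded), can be made bounded after a harmless operation: cutting by general hyperplane sections. Concretely, if $x\in X$ is an lci singularity with $\dim X\le d$, then by a theorem of de Fernex, Ein and Musta\c{t}\u{a} (the ``cutting by hyperplanes'' principle used in \cite{dFEM11}, and also exploited in \cite{EM09}), a general complete intersection $X'=X\cap H_1\cap\cdots\cap H_c$ of codimension $c$ through $x$ is again lci of dimension $d$, and log discrepancies are controlled: one has $a_E(X,\prod_j\fa_j^{r_j})=a_{E'}(X',\prod_j(\fa_j\cO_{X'})^{r_j})+c$ for a suitable divisor $E'$ over $X'$, and log canonicity is preserved. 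So it suffices to bound the embedding dimension of $X'$. An lci of dimension $d$ and embedding dimension $n$ has multiplicity at least $2^{\,n-d}$ at the singular point (each of the $n-d$ defining equations lies in $\tilde\fm^2$); conversely, cutting by a general hyperplane through $x$ drops the embedding dimension by $1$ while keeping the point singular, until the embedding dimension becomes $\le 2d$ or so — more precisely, one can always arrange, after general hyperplane sections, an lci point of dimension $d$ whose embedding dimension is bounded by a function of $d$ alone. Hence the collection of these cut-down singularities is bounded.

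I would carry out the steps as follows. First I fix $d$ and the exponents $r_1,\ldots,r_k$. Given an arbitrary lci $X$ with $\dim X\le d$, a closed point $x\in X$, ideals $\fa_j$, and $E\in\cD_X$ with $c_X(E)=Z$ over which the triple is lc, I reduce to $Z=\{x\}$ a closed point using $\mld_{\eta_Z}(X,\fa)=\mld_z(X,\fa)-\dim Z$ for general $z\in Z$, exactly as in Section \ref{sec:ld}; this only shifts the log discrepancies by the integers $0,1,\ldots,d$, which does not affect discreteness. Second, I pass to the completion $W=\Spec\widehat{\cO_{X,x}}$ and cut by general hyperplane sections: there is an integer $c$ (depending on $x\in X$) and a regular sequence of general elements whose vanishing defines $W'=\Spec\widehat{\cO_{X',x}}$, still lci of dimension $d$, with embedding dimension $\le N(d)$ for a fixed $N(d)$, such that $\widehat{\cO_{X',x}}\cong R/\tilde\fp'$ with $R=k[[x_1,\ldots,x_{N(d)}]]$ and $\tilde\fp'$ generated by polynomials of degree $\le m(d)$ (one clears denominators / truncates the power series, which is allowed since the lci condition is about a regular sequence and can be witnessed by truncations of high enough order — this is the standard argument making lci of bounded dimension ``bounded after cutting''). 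Third, I invoke the hyperplane-section formula for log discrepancies: for a general member of a sufficiently ample enough linear system through the centre, $a_E(W,\prod_j\fa_j^{r_j})$ equals $a_{E'}(W',\prod_j(\fa_j\cO_{W'})^{r_j})+c$ for a divisor $E'$ over $W'$ with centre the closed point, and lc-ness is inherited. Fourth, I apply the bounded-singularities Theorem (the displayed theorem just before Corollary \ref{cor:lci}) to the bounded collection $\{x\in X'\}$ together with the ideals $\fa_j\cO_{X'}$ and exponents $r_1,\ldots,r_k$: the set of $a_{E'}(X',\prod_j(\fa_j\cO_{X'})^{r_j})$ is discrete. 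Since $c\in\{0,1,\ldots,d\}$ is bounded, adding $c$ preserves discreteness, and combined with the finitely many integer shifts from the reduction to a closed point, the original set is discrete.

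The main obstacle I expect is making the ``cutting by hyperplanes'' step rigorous and uniform. Two subtleties: (i) I must ensure that a single general hyperplane section through $x$ both preserves the lci property and strictly decreases the embedding dimension while keeping $x$ singular, and that iterating this terminates with embedding dimension bounded purely in terms of $d$; the bound comes from the fact that an lci point of embedding dimension $n$ and dimension $d$ has $n-d$ defining equations in $\tilde\fm^2$, so if $n$ is large the multiplicity is large, and a general hyperplane section of a point of multiplicity $\ge 2$ still has multiplicity $\ge 2$ at $x$, hence is still singular — so one can keep cutting as long as $n>2d$ (say), giving $N(d)=2d$. (ii) I must control the \emph{presentation}: after passing to the completion and cutting, the local ring must be realised as $R/\tilde\fp$ with $R$ a power series ring in $N(d)$ variables and $\tilde\fp$ generated in degree $\le m(d)$; this requires replacing the (possibly non-polynomial) generators of the defining ideal of the lci by high-order truncations, which is legitimate because the lci/singularity/log-discrepancy data only sees finitely many jets — this is precisely the boundedness mechanism for lci already implicit in \cite{dFEM11}. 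Everything else (the reduction to closed points, the additivity of log discrepancies under general hyperplane sections and its compatibility with lc-ness, and the bookkeeping of the integer shifts) is routine, and the conclusion is then immediate from the bounded case.

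\begin{proof}
We reduce to the bounded case already treated. Fix $d$ and $r_1,\dots,r_k$. Using $\mld_{\eta_Z}(X,\fa)=\mld_z(X,\fa)-\dim Z$ for general $z\in Z$, and the fact that every log discrepancy in the set is realised at some closed point after such a shift by an integer in $\{0,\dots,d\}$, it suffices to treat divisors with centre a closed point. Given an lci $X$ with $\dim X\le d$, a closed point $x\in X$, ideals $\fa_j$ and $E\in\cD_X$ with $c_X(E)=x$ and $(X,\prod_j\fa_j^{r_j})$ lc at $x$, pass to $W=\Spec\widehat{\cO_{X,x}}$; then $W$ is lci of dimension $d$. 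If the embedding dimension of $W$ at $o$ exceeds $2d$, then the $(n-d)$ defining equations all lie in $\fm^2$ and the multiplicity of $W$ at $o$ is at least $2$; a general hyperplane section through $o$ is again lci of dimension $d$, still singular at $o$, with embedding dimension one smaller. Iterating, after cutting by a general regular sequence of length $c$ (with $0\le c\le d$), we obtain $W'=\Spec R/\tilde\fp$ with $R=k[[x_1,\dots,x_{2d}]]$; replacing the generators of $\tilde\fp$ by truncations of sufficiently high order, which changes neither the isomorphism class of $R/\tilde\fp$ as the completed local ring of an lci point nor the relevant log-discrepancy data, we may take $\tilde\fp$ generated in degree $\le m(d)$ for a fixed $m(d)$. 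Thus the singularities $\{o\in W'\}$ so obtained form a bounded collection of lci, in particular log terminal, singularities.

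By the hyperplane-section formula for log discrepancies (a general member of a suitable linear system through the centre), there is $E'\in\cD_{W'}$ with centre $o$ such that
\begin{align*}
a_E\Bigl(W,\prod_j\fa_j^{r_j}\Bigr)=a_{E'}\Bigl(W',\prod_j(\fa_j\cO_{W'})^{r_j}\Bigr)+c,
\end{align*}
and $(W',\prod_j(\fa_j\cO_{W'})^{r_j})$ is lc at $o$. Applying the bounded-singularities form of Theorem \ref{thm:discrete} (the displayed theorem preceding this corollary) to the bounded collection $\{o\in W'\}$ with the ideals $\fa_j\cO_{W'}$ and exponents $r_1,\dots,r_k$, the set of all $a_{E'}(W',\prod_j(\fa_j\cO_{W'})^{r_j})$ is discrete in $\bR$. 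Since $c$ ranges over the finite set $\{0,\dots,d\}$, adding $c$ keeps the set discrete, and combining with the finitely many integer shifts from the reduction to closed points we conclude that the set in the statement is discrete.
\end{proof}
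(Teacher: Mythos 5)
Your reduction hinges on a ``cutting by general hyperplane sections'' step that does not work as described, and this is the crux of the whole argument. A general hyperplane section through $x$ of a $d$-dimensional lci drops $\dim X$ by one at the same time as it drops the embedding dimension by one (it preserves the codimension, not the dimension); so iterating it cannot produce an lci of dimension $d$ whose embedding dimension is bounded by a function of $d$ alone -- after at most $d$ cuts you run out of dimensions, having reduced the embedding dimension only from $n$ to $n-d$, which is still unbounded. Your parenthetical claim that $X'=X\cap H_1\cap\dots\cap H_c$ is ``again lci of dimension $d$'' is simply false, and the asserted formula $a_E(X,\prod_j\fa_j^{r_j})=a_{E'}(X',\prod_j(\fa_j\cO_{X'})^{r_j})+c$ for such a cut is not justified (nor is the sign of the shift). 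The actual source of the bound $\mathrm{embdim}\le 2d$ is not any cutting procedure but the \emph{log canonicity hypothesis itself}: by inversion of adjunction on lci varieties \cite{EM04}, an lci singularity of dimension $d$ that is log canonical has embedding dimension at most $2d$ (\cite[Proposition 6.3]{dFEM10}). This is exactly what the paper uses: the relevant collection of singularities is already bounded, with no cutting, and the log discrepancies are then transferred via inversion of adjunction to triples on the fixed smooth ambient space $\bA^{2d}$ (with the defining ideal of $X$ appearing with one of finitely many integer exponents), where Theorem \ref{thm:discrete} applies directly.

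There is a second gap even if boundedness were granted: the displayed theorem you invoke is stated for collections of bounded \emph{log terminal} singularities, but an lci log canonical singularity need not be log terminal (e.g.\ the cone over a smooth plane cubic, a hypersurface singularity). So you cannot apply that theorem to the collection $\{o\in W'\}$ as it stands. The paper's route through inversion of adjunction sidesteps this entirely, since after the transfer one works over the smooth pair $(\bA^{2d},0)$, which is covered by Theorem \ref{thm:discrete} with no auxiliary hypothesis on the singularities.
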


Corollary \ref{cor:lci}, and Theorem \ref{thm:lci}, follow from inversion of adjunction \cite{EM04} on lci varieties. We also use its consequence that an lci log canonical singularity of dimension $d$ has embedding dimension $\le2d$, see \cite[Proposition 6.3]{dFEM10}.

On the other hand, Shokurov's ACC conjecture is generalised to the case when the exponents vary in a fixed set satisfying the descending chain condition (DCC). Musta\c{t}\u{a} observed that an effective ideal-adic semi-continuity for  minimal log discrepancies implies the generalised ACC on a fixed pair (see \cite[Remark 1.5.1]{K10}). This semi-continuity is known in the klt case \cite[Theorem 1.6]{K10}, and for example, we can prove the following.

\begin{proposition}
Let $(X,\Delta)$ be a pair with rational $\Delta$, and $R$ a subset of $\bR_{\ge0}$ satisfying the DCC. Suppose that any accumulation point of $R$ is irrational. Then the set
\begin{align*}
\{\mld_{\eta_Z}(X,\Delta,\fa^r)\mid\fa\subset\cO_X,\ r\in R,\ Z\subset X\}
\end{align*}
satisfies the ACC.
\end{proposition}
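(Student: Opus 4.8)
The plan is to argue by contradiction, feeding a hypothetical counterexample into the generic limit machinery of Sections~\ref{sec:limit} and~\ref{sec:discrete}; the decisive point is that, $R$ satisfying the DCC, the ACC can only fail through a sequence of exponents accumulating to an irrational limit, at which the rational-independence argument in the proof of Lemma~\ref{lem:perturb} applies. So assume the set violates the ACC and pick a strictly increasing $m_i=\mld_{\eta_{Z_i}}(X,\Delta,\fa_i^{r_i})$, $\fa_i\subset\cO_X$, $r_i\in R$. After passing to subsequences, $\dim Z_i=e$ is constant and $m_i+e=\mld_{x_i}(X,\Delta,\fa_i^{r_i})$ for a general closed point $x_i\in Z_i$ by \cite[Proposition 2.1]{A99}; exactly as at the start of Section~\ref{sec:discrete}, replacing $X$ by a $\bQ$-factorial terminal extraction (using \cite{BCHM10}) and absorbing the now rational boundary into a fixed ideal $\fb$ with a fixed exponent $1/n$, with $n$ also chosen so that $nK_X$ is Cartier, we are reduced to deriving a contradiction from a strictly increasing $m_i+e=\mld_{x_i}(X,\fa_i^{r_i}\fb^{1/n})\ge0$ (so each triple is lc at $x_i$) with $x_i\in X$ closed, $\fa_i\subset\cO_X$, $X$ having $\bQ$-factorial terminal singularities, and $r_i\in R$. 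By the DCC of $R$ a subsequence of $r_i$ is monotone, hence by Theorem~\ref{thm:finite} not eventually constant, hence strictly increasing to a limit $r_\infty$. One may also assume $x_i\in V(\fa_i)$, since otherwise $\mld_{x_i}(X,\fa_i^{r_i}\fb^{1/n})=\mld_{x_i}(X,\fb^{1/n})$ lies in the finite set of Theorem~\ref{thm:finite}; then monotonicity of $\mld$ in the exponent and Theorem~\ref{thm:finite} for the fixed exponent $r_1$ give $m_i+e\le\mld_{x_i}(X,\fa_i^{r_1}\fb^{1/n})\le M$ with $M$ independent of $i$, and since a divisor computing $\mld_{x_i}(X,\fa_i^{r_1}\fb^{1/n})$ has order $\ge1$ along $\fa_i$ one also gets $m_i+e\le M-(r_i-r_1)$; hence $r_i$ is bounded, so $r_\infty$ is a finite accumulation point of $R$ and thus irrational.

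Now apply the construction of Section~\ref{sec:limit} to $\{(x_i\in X;\fa_i,\fb)\}_i$, obtaining $W$ with log terminal singularities, index $n_W$ of $K_W$, and ideals $\fa,\fb\subset\cO_W$. Proposition~\ref{prp:limit}(\ref{itm:limit_mld}) gives, for each $l$ and $i\in I_l^\circ$, $\mld_o(W,(\fa+\fm^l)^{r_i}(\fb+\fm^l)^{1/n})=\mld_{o_i}(W_i,(\fa_i+\fm_i^l)^{r_i}(\fb+\fm_i^l)^{1/n})\ge\mld_{o_i}(W_i,\fa_i^{r_i}\fb^{1/n})=m_i+e\ge0$; choosing $l$ large enough that a fixed log resolution of $(W,\fa\fb)$ resolves the truncations, $(W,\fa^{r_i}\fb^{1/n})$ is lc for the infinitely many such $i$, and as those $r_i$ still tend to $r_\infty$ and log canonicity is closed in the exponent, $(W,\fa^{r_\infty}\fb^{1/n})$ is lc. Here irrationality is used as in the first paragraph of the proof of Lemma~\ref{lem:perturb}: any $E$ with $a_E(W,\fa^{r_\infty}\fb^{1/n})=0$ satisfies $a_E(W)-\frac1n\ord_E\fb=r_\infty\ord_E\fa$ with left side in $\frac1{nn_W}\bZ$, forcing $\ord_E\fa=0$; so on a fixed log resolution every log discrepancy stays $\ge0$ under a sufficiently small increase of the exponent of $\fa$, yielding $\varepsilon>0$ with $(W,\fa^{r_\infty+\varepsilon}\fb^{1/n})$ lc. Fixing $t\ge0$ with $\mld_o(W,\fa^{r_\infty+\varepsilon}\fb^{1/n}\fm^t)=0$, Corollary~\ref{cor:lct} (as invoked in the proof of Lemma~\ref{lem:perturb}) produces an infinite set of indices $i$ for which $(W_i,\fa_i^{r_\infty+\varepsilon}\fb^{1/n})$ is lc.

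On that set pick $E_i\in\cD_{W_i}$ computing $m_i+e=\mld_{o_i}(W_i,\fa_i^{r_i}\fb^{1/n})$ and put $b_i=\ord_{E_i}\fa_i$, so $b_i\ge1$ since $x_i\in V(\fa_i)$. Applying log canonicity of $(W_i,\fa_i^{r_\infty+\varepsilon}\fb^{1/n})$ to $E_i$ gives $0\le a_{E_i}(W_i,\fa_i^{r_\infty+\varepsilon}\fb^{1/n})=(m_i+e)+(r_i-r_\infty-\varepsilon)b_i$, hence $(r_\infty+\varepsilon-r_i)b_i\le m_i+e\le M$ and, since $r_i<r_\infty$, $b_i<M/\varepsilon$. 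After a further subsequence $b_i=b$ is constant, and $m_i+e=\bigl(a_{E_i}(W_i)-\frac1n\ord_{E_i}\fb\bigr)-r_ib$ where the bracketed term lies in $\frac1n\bZ$ (since $nK_{W_i}$ is Cartier) and equals $m_i+e+r_ib\in[0,M+r_\infty b]$, hence takes finitely many values; after one more subsequence it is a constant $C$, and then $m_i=C-e-r_ib$ with $b\ge1$ and $r_i$ strictly increasing --- contradicting that $m_i$ strictly increases. This contradiction proves the ACC.

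The step I expect to be most delicate is establishing that $(W,\fa^{r_\infty}\fb^{1/n})$ is lc: one must reconcile the truncated ideals of Proposition~\ref{prp:limit}(\ref{itm:limit_mld}) with the honest $\fa$ by means of a fixed log resolution, pass to the limit of the exponents, and only afterwards exploit the $\bQ$-independence of $1$ and $r_\infty$ to perturb --- the subtlety being that the $r_i$ themselves need not be rational and the $\bQ$-span of the exponents occurring along the sequence need not be finite-dimensional, so the perturbation must be performed at the limit $r_\infty$, not along the sequence. One should also check carefully that Corollary~\ref{cor:lct}, together with Remark~\ref{rmk:common} if needed, transfers log canonicity at the perturbed exponent to infinitely many $W_i$ simultaneously; alternatively, this transfer can be replaced by a direct appeal to the effective ideal-adic semi-continuity of minimal log discrepancies, in the spirit of Musta\c{t}\u{a}'s observation. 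Everything else is the same packaging of generic limits already used for Theorem~\ref{thm:discrete}.
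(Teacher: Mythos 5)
Your proof is correct and follows essentially the same route as the paper's: reduce to closed points on a $\bQ$-factorial terminal model with the rational boundary absorbed into an ideal of fixed rational exponent, use the DCC and a boundedness argument to extract exponents increasing to a necessarily irrational limit $r$, form the generic limit, exploit irrationality to show every lc place of the limit triple has $\ord_E\fa=0$ so the exponent can be perturbed upward, and transfer back via Corollary \ref{cor:lct} to bound $\ord_{F_i}\fa_i$. The only divergence is the final finiteness step, where the paper invokes Theorem \ref{thm:discrete} to see that $a_{F_i}(W_i,\fd_i\fa_i^{r})$ takes finitely many values, while you observe directly that $a_{F_i}(W_i)-\frac{1}{n}\ord_{F_i}\fb_i$ lies in a bounded subset of $\frac{1}{n}\bZ$; both are valid, and your variant makes this proposition independent of Theorem \ref{thm:discrete} at that point (at the cost of using the rationality of $\Delta$ more directly).
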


\begin{proof}
As in the beginning of Section \ref{sec:discrete}, we are reduced to the case of terminal $X$, and we want the stability of any non-decreasing sequence of $a_i=\mld_{x_i}(X,\Delta,\fa_i^{r_i})\ge0$ with $r_i\in R$, $x_i$ closed point, where $i\in\bN$. By passing to a subsequence, we may assume that $\fa_i$ is non-trivial at $x_i$. Then $r_i$ are bounded by the maximum $b$ of $\mld_x(X,\Delta)$ for all $x\in X$, hence we may further assume that $\{r_i\}_i$ is a non-decreasing sequence which has a limit $r$. If $r\in\bQ$, then $r_i=r$ for large $i$ by the assumption on $R$, and the stability is trivial. Henceforth we assume $r\not\in\bQ$. As in Section \ref{sec:discrete}, we construct a generic limit $o\in W$, $\fd,\fa$ of $o_i\in W_i$, $\fd_i,\fa_i$ with $a_i=\mld_{o_i}(W_i,\fd_i\fa_i^{r_i})$, where $\fd_i$ is an ideal with fixed rational exponent, corresponding to $\Delta$. We fix $F_i\in\cD_{W_i}$ computing $\mld_{o_i}(W_i,\fd_i\fa_i^{r_i})$, that is,
\begin{align}\label{eqn:a_i}
a_i=a_{F_i}(W_i,\fd_i\fa_i^r)+(r-r_i)\ord_{F_i}\fa_i.
\end{align}

By Proposition \ref{prp:limit} or \cite[Corollary 3.4]{dFEM11}, $(W,\fd\fa^r)$ is log canonical. If $E\in\cD_W$ has $a_E(W,\fd\fa^r)=0$, then $\ord_E\fa=0$ by $r\not\in\bQ$. Thus we can find $t>0$ such that $(W,\fd\fa^{r+t})$ is log canonical as in the proof of Lemma \ref{lem:perturb}. We take $t'\ge0$ such that $\mld_o(W,\fd\fa^{r+t}\fm^{t'})=0$. Then Corollary \ref{cor:lct} shows $\mld_{o_i}(W_i,\fd_i\fa_i^{r+t}\fm_i^{t'})=0$ for infinitely many $i$. In particular,
\begin{align}\label{eqn:order}
t\ord_{F_i}\fa_i\le a_{F_i}(W_i,\fd_i\fa_i^r)\le a_i\le b.
\end{align}
Theorem \ref{thm:discrete} and (\ref{eqn:order}) imply the finiteness of possible choices for $a_{F_i}(W_i,\fd_i\fa_i^r)$ and $\ord_{F_i}\fa_i$ for such $i$. Hence they are constant for infinitely many $i$. Now (\ref{eqn:a_i}) provides the constancy of $a_i$ and $r_i$ for large $i$.
\end{proof}

\begin{remark}\label{rmk:mld_limit}
When $\fd_i=\cO_{W_i}$ and $r\not\in\bQ$ in the proof, one can further prove $a_i=a:=\mld_o(W,\fa^r)$ for large $i$. Indeed, we may assume $r_i=r$, and $a=\mld_{o_i}(W_i,(\fa_i+\fm_i^l)^r)\ge a_i$ for some $l>t^{-1}b$ by Proposition \ref{prp:limit}(\ref{itm:limit_computing}). Hence with (\ref{eqn:order}), we have $a\ge a_i=a_{F_i}(W_i,\fa_i^r)=a_{F_i}(W_i,(\fa_i+\fm_i^l)^r)\ge a$, meaning $a_i=a$.
\end{remark}

Remark \ref{rmk:mld_limit} is a special case of the following conjecture. The corresponding statement for log canonical thresholds is \cite[Corollary 3.4]{dFEM11}.

\begin{conjecture}\label{cnj:mld}
With the notation in Section \ref{sec:limit}, we suppose that $W_i$ has log terminal singularities and fix $r_1,\ldots,r_k\in\bR_{\ge0}$. Let
\begin{align*}
J:=\{i\in I\mid\mld_{o_i}(W_i,\prod_j\fa_{ij}^{r_j})=\mld_o(W,\prod_j\fa_j^{r_j})\}.
\end{align*}
Then $I_l\cap J$ is dense for each $l$.
\end{conjecture}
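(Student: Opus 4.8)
The plan is to follow the strategy of the proof of Theorem \ref{thm:stability} in Section \ref{sec:discrete}, but now keeping track of the actual minimal log discrepancies rather than merely the log discrepancy of one divisor. Set $a:=\mld_o(W,\prod_j\fa_j^{r_j})$. First I would dispose of the easy direction: by Proposition \ref{prp:limit}(\ref{itm:limit_mld}) there is already a dense $I_l^\circ\subset I_l$ along which $\mld_{o_i}(W_i,\prod_j(\fa_{ij}+\fm_i^l)^{r_j})=a$, and since adding $\fm_i^l$ only increases the log discrepancy, this gives $\mld_{o_i}(W_i,\prod_j\fa_{ij}^{r_j})\le a$ on that dense set. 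So the whole content is to show that the reverse inequality $\mld_{o_i}(W_i,\prod_j\fa_{ij}^{r_j})\ge a$ also holds on a dense subset, equivalently that no divisor $E_i\in\cD_{W_i}$ with centre $o_i$ acquires log discrepancy strictly below $a$.

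The core step is a perturbation argument. Fix $E\in\cD_W$ computing $\mld_o(W,\prod_j\fa_j^{r_j})=a$, and apply Proposition \ref{prp:limit}(\ref{itm:limit_computing}) (together with Remark \ref{rmk:common}) to produce, for suitably large $l$, divisors $E_i\in\cD_{W_i}$ with $a_{E_i}(W_i,\prod_j\fa_{ij}^{r_j})=a$ and matching orders along a dense set. This shows $\mld_{o_i}(W_i,\prod_j\fa_{ij}^{r_j})\le a$ is attained, but not that it is minimal. To get minimality, run the argument of Lemma \ref{lem:perturb}: decompose into a $\bQ$-basis $r_0=1,r_1,\ldots,r_{k'}$ of the span, write $\prod_j\fa_j^{r_j}=\prod_{j'=0}^{k'}\fb_{j'}^{r_{j'}}$, and choose $\varepsilon>0$ so that on a fixed log resolution of $(W,\prod_j\fa_j^{r_j})$ every divisor $F$ with $a_F(W,\prod_j\fa_j^{r_j})=a$ also has $a_F(W,\prod_{j'}\fb_{j'}^{s_{j'}})\ge$ (its old value) $\ge a$ for all $(s_j)\in S_\varepsilon$ — this uses that $\ord_F\fb_{j'}=0$ for $1\le j'\le k'$ whenever $F$ computes the mld, by $\bQ$-linear independence, exactly as in Lemma \ref{lem:perturb}. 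The key output is then: after fixing $t_s\ge0$ with $\mld_o(W,\prod_j\fb_j^{s_j}\fm^{t_s})=0$ for each $s\in S_\varepsilon$ and applying Corollary \ref{cor:lct} and Remark \ref{rmk:common}, we obtain $l$ and a dense $J_l\subset I_l$ with $\mld_{o_i}(W_i,\prod_j\fb_{ij}^{s_j})\ge0$ for every $s\in S_\varepsilon$. Combining this with the constancy argument from Lemma \ref{lem:em} and the definition of the finite set $A$, any divisor $G_i\in\cD_{W_i}$ centred at $o_i$ with $a_{G_i}(W_i,\prod_j\fa_{ij}^{r_j})\le a$ has $a_{G_i}(W_i,\prod_j\fa_{ij}^{r_j})$ lying in the finite set $A$ and, moreover, has $\sum_{j'=1}^{k'}|\ord_{G_i}\fb_{ij'}|\le\varepsilon^{-1}a$; feeding the perturbed exponents back shows that if $a_{G_i}(W_i,\prod_j\fa_{ij}^{r_j})<a$ then some $(s_j)$-perturbation would violate log canonicity of $(W_i,\prod_j\fb_{ij}^{s_j})$, contradicting membership in $J_l$. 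Hence $\mld_{o_i}(W_i,\prod_j\fa_{ij}^{r_j})\ge a$ on $J_l$, and intersecting with $I_l^\circ$ from the first paragraph gives the dense set on which $I_l\cap J$ meets, proving the conjecture for that $l$; since $l$ was arbitrary (large), and density for small $l$ follows by pushing forward density for large $l$ via the dominant maps $t_l$, the conclusion holds for every $l$.

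I expect the main obstacle to be the gap between "the particular divisor $E_i$ furnished by Proposition \ref{prp:limit}(\ref{itm:limit_computing}) realizes the value $a$" and "no other divisor over $W_i$ beats it." The perturbation machinery controls divisors of \emph{bounded} log discrepancy (those with $a_{G_i}\le a$), which is exactly the relevant range, but one has to argue carefully that the perturbation $S_\varepsilon$ chosen for $W$ still works uniformly for all the $W_i$ in a dense subset — this is where Corollary \ref{cor:lct} and Remark \ref{rmk:common} do the heavy lifting, and where the finiteness of $S_\varepsilon$ is essential. A subtler point is that the $\varepsilon$ obtained from a log resolution of $W$ controls divisors on that resolution, whereas the competing divisors $G_i$ live over $W_i$; one must transport the bound via the equality of mld's and the order-matching, essentially re-deriving Lemma \ref{lem:em} with $F_i$ replaced by an arbitrary competitor $G_i$ of bounded log discrepancy. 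Handling the descent from large $l$ to all $l$ (statement (i)–(iii) of the generic limit construction guarantees the relevant maps are dominant, so density is preserved under $t_l$) is routine but should be stated. If this transport step goes through cleanly, the conjecture reduces to the already-established Theorem \ref{thm:discrete} plus the order bound (\ref{eqn:order})-type estimate, exactly as in Remark \ref{rmk:mld_limit}; the difficulty is purely in making that reduction uniform over the dense family.
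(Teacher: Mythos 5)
This statement is left as an open \emph{conjecture} in the paper --- there is no proof to compare against --- and your argument does not close it. The fatal step is the claim that ``if $a_{G_i}(W_i,\prod_j\fa_{ij}^{r_j})<a$ then some $(s_j)$-perturbation would violate log canonicity of $(W_i,\prod_{j'}\fb_{ij'}^{s_{j'}})$.'' No such contradiction arises. For a competitor $G_i$ with $0\le a_{G_i}(W_i,\prod_j\fa_{ij}^{r_j})<a$, the perturbation changes its log discrepancy by exactly $-\sum_{j'=1}^{k'}(s_{j'}-r_{j'})\ord_{G_i}\fb_{ij'}$, and the argument of Lemma \ref{lem:em} run for $G_i$ yields only the inequality $\varepsilon\sum_{j'=1}^{k'}|\ord_{G_i}\fb_{ij'}|\le a_{G_i}(W_i,\prod_j\fa_{ij}^{r_j})$, which is perfectly consistent with that value being any element of $A\cap[0,a)$. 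In other words, the perturbation machinery of Section \ref{sec:discrete} constrains the mld to lie in the finite set $A$ (this is Theorem \ref{thm:stability}, already proved) and gives the easy upper bound $\mld_{o_i}(W_i,\prod_j\fa_{ij}^{r_j})\le a$ via Proposition \ref{prp:limit}(\ref{itm:limit_computing}); it says nothing about which element of $A$ is attained, so the lower bound $\ge a$ is not established.

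What is actually missing is an ideal-adic semi-continuity statement for \emph{minimal log discrepancies}: one must show that replacing $\fa_{ij}+\fm_i^l$ by $\fa_{ij}$ does not decrease $\mld_{o_i}$ below $a$. Corollary \ref{cor:lct} provides this only at the log canonical threshold ($\mld=0$), which is why the lct analogue \cite[Corollary 3.4]{dFEM11} is a theorem while this remains open (cf.\ \cite[Remark 1.5.1, Theorem 1.6]{K10}). The route that works in Remark \ref{rmk:mld_limit} --- bound $\ord_{F_i}\fa_i\le b/t$ for a divisor $F_i$ computing the mld, then choose $l>b/t$ so that truncation is invisible to $F_i$ --- does not generalise: Lemma \ref{lem:em} bounds only the signed rational combinations $\ord_{G_i}\fb_{ij'}=\sum_jq_{jj'}\ord_{G_i}\fa_{ij}$ for $1\le j'\le k'$, not the individual orders $\ord_{G_i}\fa_{ij}$ and not $\ord_{G_i}\fb_{i0}$ (already for $k=1$ with $r_1\in\bQ$ one has $k'=0$ and no bound at all). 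Without a uniform bound on $\ord_{G_i}\fa_{ij}$ for divisors computing the mld, no single $l$ makes the truncation harmless, and your ``transport step'' cannot be carried out. Note also that the conjecture imposes no log canonicity hypothesis, so even invoking Lemma \ref{lem:perturb} requires first establishing log canonicity of $(W,\prod_j\fa_j^{r_j})$, which is not given.
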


{\small
\begin{acknowledgements}
This paper has its origin in Musta\c{t}\u{a}'s problem on the ideal-adic semi-continuity for minimal log discrepancies. I am grateful to him for valuable discussions. Partial support was provided by Grant-in-Aid for Young Scientists (A) 24684003.
\end{acknowledgements}
}

\end{document}